\theoremstyle{plain}
\newtheorem{theorem}[equation]{Theorem}
\newtheorem{proposition}[equation]{Proposition}
\newtheorem{lemma}[equation]{Lemma}
\theoremstyle{remark}
\newtheorem{question}[equation]{Question}
\newtheorem{remark}[equation]{Remark}
\numberwithin{equation}{section}
\newtheorem{example}[equation]{Example}
\newcommand{\dbar}{\bar \partial}
\newcommand{\re}{\text{Re}}
\newcommand{\im}{\text{Im}}
\newcommand\littleO{
  \mathchoice
    {{\scriptstyle\mathcal{O}}}
    {{\scriptstyle\mathcal{O}}}
    {{\scriptscriptstyle\mathcal{O}}}
    {\scalebox{0.6}{$\scriptscriptstyle\mathcal{O}$}}
  }
\newcommand{\sh}{{\mathscr H}}
\newcommand{\sL}{{\mathscr L}}
\newcommand{\C}{{\mathbb C}}
\newcommand{\R}{{\mathbb R}}
\begin{document}

\title[]{Local Plurisubharmonic Defining Functions on the Boundary}
\author{Luka Mernik}
\address{Department of Mathematics, \newline The Ohio State University, Columbus, Ohio, USA}
\email{mernik.1@osu.edu}
 
 \subjclass[2010]{32T27}
 
\begin{abstract}
Necessary conditions for a domain $\Omega\subset \C^n$ admitting a local plurisubharmonic defining function on the boundary are given. In tandem, we give an algorithm to construct a local plurisubharmonic defining function on the boundary when one exists. In some cases we show that the necessary conditions are also sufficient.
\end{abstract}

\maketitle 

\section{Introduction}

A strongly pseudoconvex domain in $\C^n$ always admits a (strongly) plurisubharmonic defining function \cite{morrowrossi1975}. However weakly pseudoconvex domains do not generally satisfy an analogous property. Diederich and Fornaess \cite{DieFor77-1} and Fornaess \cite{Fornaess79} found examples of weakly pseudoconvex domains in $\C^2$ which do not admit plurisubharmonic defining functions.

A weaker notion than having a plurisubharmonic defining function was also introduced by Diederich and Fornaess \cite{DieFor77-2}. They showed that for any bounded pseudoconvex domain $\Omega$ in $\C^n$ with $C^2$ boundary, there exists a positive constant $\eta$ and a defining function $\rho$ such that $-(-\rho)^\eta$ is plurisubharmonic on $\Omega$. Such $\eta$ is called a Diederich-Fornaess exponent and the supremum of all Diederich-Fornaess exponents is the Diederich-Fornaess index. The existence of bounded plurisubharmonic functions have later been generalized to domains with $C^1$ boundary \cite{KerzmanRosay1981} and Lipschitz boundary \cite{Demailly1987}, \cite{Harrington07}.

If a domain admits a plurisubharmonic defining function then the Diederich-Fornaess index is $1$. 
Furthermore, Fornaess and Herbig \cite{herbigfornaess07}, \cite{herbigfornaess08} showed that if a domain has a defining function plurisubharmonic on the boundary then the Diederich-Fornaess exponent is $1$. However the converse is not true: a domain with the Diederich-Fornaess index $1$ need not admit a plurisubharmonic defining function as shown by Behrens' example \cite{Behrens85}.
Examples with Diederich-Fornaess exponent strictly less than $1$ include worm domains; for explicit computations see \cite{Liu19-1}.

The focus of this paper is to identify obstructions to having a local plurisubharmonic defining function on the boundary. 
In addition to Diederich and Fornaess examples, Behrens \cite{Behrens85}  constructed a pseudoconvex domain of D'Angelo type $6$ in $\C^2$ which does not admit a local plurisubharmonic defining function, not even on the boundary. The obstruction in their examples comes from the inability to construct a real-valued multiplier function $h$ that satisfies a specific partial differential equation on certain curves lying in the boundary. 
We generalize this statement and make it precise by stating the explicit differential equation that needs to be solved. Namely:
\begin{theorem}
Suppose that $\rho=r\cdot h$ is a local plurisubharmonic defining function. Then 
\[\frac{\partial}{\partial z} \log h = -\frac{\partial}{\partial z} \log r_{\bar w} +E , \] where $|E|^2\leq C(\sL_r+|r_z|^2)$.
\end{theorem}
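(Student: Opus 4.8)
The plan is to read the claimed identity directly off the mixed entry of the complex Hessian of $\rho$, and then to control the resulting error term $E$ using the Cauchy--Schwarz inequality for positive semidefinite Hermitian forms. Writing $(z,w)$ for the coordinates on $\C^2$ and letting $M=(M_{i\bar j})$ be the complex Hessian of $\rho=r\cdot h$, the hypothesis that $\rho$ is plurisubharmonic on the boundary means precisely that $M$ is positive semidefinite at each boundary point. At such a point $r=0$, so the term $r\,h_{i\bar j}$ vanishes and a direct computation gives $M_{i\bar j}=h\,r_{i\bar j}+r_i\,h_{\bar j}+h_i\,r_{\bar j}$; in particular $M_{z\bar w}=h\,r_{z\bar w}+r_z\,h_{\bar w}+h_z\,r_{\bar w}$. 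Working near a boundary point with $r_w\neq 0$ (so that $\log r_{\bar w}$ is defined and $|r_w|$ is bounded below), I would divide by $h\,r_{\bar w}$ and rearrange to obtain the exact identity
\[ \frac{\partial}{\partial z}\log h+\frac{\partial}{\partial z}\log r_{\bar w}=\frac{M_{z\bar w}}{h\,r_{\bar w}}-\frac{r_z\,h_{\bar w}}{h\,r_{\bar w}}, \]
using $\partial_z\log h=h_z/h$ and $\partial_z\log r_{\bar w}=r_{z\bar w}/r_{\bar w}$. This is already the asserted equation with $E$ equal to the right-hand side, so all that remains is the estimate $|E|^2\le C(\sL_r+|r_z|^2)$.

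The second term of $E$ is harmless: since $h$ is bounded below, $h_{\bar w}$ is bounded, and $|r_w|$ is bounded below, we have $|r_z\,h_{\bar w}/(h\,r_{\bar w})|^2\le C|r_z|^2$. The real content is therefore the bound $|M_{z\bar w}|^2\le C(\sL_r+|r_z|^2)$, and this is where the positivity of $M$ is used. Let $t=(r_w,-r_z)$ be the unnormalized complex tangential direction, characterized by $\partial r(t)=r_z r_w+r_w(-r_z)=0$. The two rank-one terms $r_i h_{\bar j}$ and $h_i r_{\bar j}$ in $M$ are annihilated when contracted against a vector in the kernel of $\partial r$, so I expect to find $M(t,t)=h\,\sL_r(t,t)$; that is, restricted to tangential directions $M$ collapses to $h$ times the Levi form of $r$, which is a bounded multiple of the $\sL_r$ appearing in the statement.

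The key step will be the Cauchy--Schwarz inequality $|M(u,v)|^2\le M(u,u)\,M(v,v)$, valid for any positive semidefinite Hermitian form. Applying it with $u=(0,1)$ and $v=t$ gives
\[ \bigl|M_{w\bar z}\,\overline{r_w}-M_{w\bar w}\,\overline{r_z}\bigr|^2=|M(u,t)|^2\le M_{w\bar w}\cdot M(t,t)=M_{w\bar w}\,h\,\sL_r(t,t)\le C\,\sL_r, \]
where the final inequality uses that $M_{w\bar w}$ and $h$ are bounded. By the triangle inequality together with the lower bound on $|r_w|$ this yields $|M_{w\bar z}|\le C(\sqrt{\sL_r}+|r_z|)$, hence $|M_{z\bar w}|^2=|M_{w\bar z}|^2\le C(\sL_r+|r_z|^2)$. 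Feeding this back into the expression for $E$ then closes out $|E|^2\le C(\sL_r+|r_z|^2)$.

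The one genuine obstacle I anticipate is the middle step: one must recognize that tangential positivity of $M$ forces the mixed Hessian entry $M_{z\bar w}$ to vanish to the right order wherever the Levi form degenerates, and that the correct way to extract this is to pair the ambient $w$-direction against the tangential direction $t$ in Cauchy--Schwarz rather than pairing the two coordinate directions (which would only bound $M_{z\bar w}$ by the uncontrolled diagonal entry $M_{z\bar z}$). Once the reduction $M(t,t)=h\,\sL_r$ is established, the remaining manipulations are routine.
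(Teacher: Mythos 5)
Your proof is correct, and its overall architecture matches the paper's: both expand $\rho_{z\bar w}$ on the boundary (where the $r\,h_{z\bar w}$ term drops), peel off the $r_z h_{\bar w}$ contribution, divide by $h\,r_{\bar w}$, and reduce everything to the single estimate $|\rho_{z\bar w}|^2\le C(\sL_r+|r_z|^2)$, which in turn rests on positive semidefiniteness of the Hessian of $\rho$ together with the identity $\sL_\rho=h\,\sL_r$ on $b\Omega$. Where you differ is in how that key estimate is extracted. The paper first tests the Hessian on the dilated vectors $\langle 2^{\pm1}r_w,-r_z\rangle$ to sandwich the diagonal entry $\rho_{z\bar z}$ between multiples of $\sL_r$ and $|r_z|^2$ (Lemma 3.4, inequalities (1)--(3)), and only then invokes the coordinate-direction Cauchy--Schwarz $|\rho_{z\bar w}|^2\le\rho_{z\bar z}\rho_{w\bar w}$ to get (4). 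You instead apply Cauchy--Schwarz once, pairing $e_w=(0,1)$ against the tangential vector $t=(r_w,-r_z)$, using $M(t,t)=h\,\sL_r$ and the boundedness of $M_{w\bar w}$; this bypasses the intermediate bounds on $\rho_{z\bar z}$ entirely and is arguably cleaner, at the cost of not recording those diagonal estimates, which the paper reuses elsewhere. Your closing remark about why one must pair against $t$ rather than against the $z$-coordinate direction is exactly the right diagnosis, and the remaining steps (boundedness of $h$, $h_{\bar w}$, and the lower bound on $|r_w|$ near the origin) are handled as in the paper.
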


The second obstruction involves the determinant of the complex Hessian $\sh$. We show that the Levi form generates ``free'' positivity for the determinant of the complex Hessian. This gives rise to a necessary and sufficient condition for having a plurisubharmonic defining function. 
\begin{theorem}
A domain $\Omega=\{r<0\}\subset \C^2$ has a local plurisubharmonic defining function in a neighborhood of $p\in b\Omega$ if and only if there exists a function $T:\C^2\rightarrow \R$ and a positive constant $C>0$ such that $-\sh_{(1+T)r}\leq C \sL_r$ in a neighborhood of $p$.
\end{theorem}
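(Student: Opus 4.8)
The plan is to recognize that producing a plurisubharmonic defining function is the same as choosing a positive multiplier, and that the stated inequality measures exactly the amount of negativity that the Levi form can repair ``for free.'' Any two local defining functions for $\Omega$ differ by a positive smooth factor, so every candidate near $p$ has the form $\rho=(1+T)r$ with $1+T>0$, and $\rho$ is plurisubharmonic precisely when $\sh_{(1+T)r}=\sh_\rho\ge 0$ in a neighborhood of $p$. Thus the theorem reduces to the equivalence: some $T$ gives $\sh_{(1+T)r}\ge 0$ near $p$ if and only if some $T$ and $C>0$ give $-\sh_{(1+T)r}\le C\sL_r$ near $p$. I would work in an adapted frame $\{L,N\}$, where $L=r_w\partial_z-r_z\partial_w$ spans the complex tangent space (so $\langle\partial r,L\rangle=0$) and $N$ is transverse with $\langle\partial r,N\rangle=1$; here $\sL_r$ is the tangential entry $\sh_r(L,\bar L)$, the form on the right is the rank-one form equal to $C\sh_r(L,\bar L)$ on $L$ and $0$ on $N$, and a Hermitian form on $\C^2$ is positive semidefinite iff in this frame its two diagonal entries and its determinant are nonnegative.

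The forward implication is immediate: if $\rho=(1+T)r$ is plurisubharmonic then $-\sh_{(1+T)r}=-\sh_\rho\le 0$, and since the sublevel sets of $r$ in $\overline{\Omega}$ near $p$ are pseudoconvex we have $\sL_r\ge 0$ there, whence $-\sh_{(1+T)r}\le 0\le C\sL_r$ for every $C>0$. I would also record the basic identity
\[\sh_{(1+T)r}(X,\bar X)=(1+T)\,\sh_r(X,\bar X)+r\,\sh_T(X,\bar X)+2\re\!\big[\langle\partial T,X\rangle\,\overline{\langle\partial r,X\rangle}\big],\]
which shows that on the complex tangent space the cross term drops out and $\sh_{(1+T)r}(L,\bar L)=(1+T)\sL_r+r\,\sh_T(L,\bar L)$. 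This is the identity on which everything rests; note that the boundary restriction of its tangential component is exactly the computation behind the first theorem's control of $\partial_z\log h$ by $\sL_r$.

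The converse is the heart of the matter and is where the ``free positivity'' of the Levi form enters. Starting from $(T,C)$ with $\sh_{(1+T)r}+C\sL_r\ge 0$, Cauchy--Schwarz for this positive semidefinite form bounds its off-diagonal entry, giving $|\sh_{(1+T)r}(L,\bar N)|^2\le\big(\sh_{(1+T)r}(L,\bar L)+C\sL_r\big)\,\sh_{(1+T)r}(N,\bar N)$; in particular the cross term degenerates wherever the Levi form does. I would then replace $1+T$ by $(1+T)e^{s\psi}$, with $\psi$ chosen to have large outward normal derivative, so that the normal entry $\sh(N,\bar N)$ gains a term $\gtrsim s$ while $\sh(L,\bar L)$ and $\sh(L,\bar N)$ are unchanged to leading order on $b\Omega$. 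By the $\C^2$ criterion it suffices to make the determinant nonnegative, and the cross-term bound yields
\[\sh(L,\bar L)\,\sh(N,\bar N)-|\sh(L,\bar N)|^2\ \ge\ \sh(L,\bar L)\cdot cs-C\,\sL_r\,\sh(N,\bar N),\]
which on $b\Omega$ equals $\sL_r\big[(1+T)cs-C\,\sh(N,\bar N)\big]\ge 0$ once $s$ is large, since there $\sh(L,\bar L)=(1+T)\sL_r$ is itself a positive multiple of $\sL_r$. This is precisely the mechanism by which the Levi form generates free positivity for the determinant.

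The step I expect to be the main obstacle is making the determinant estimate uniform in a full two-sided neighborhood of $p$, not merely on $b\Omega$. Off the boundary the terms carrying the factor $r$ (namely $r\,\sh_T$ and the analogous pieces from the modification) no longer vanish and may have either sign, so I must show that the $\gtrsim s$ positivity manufactured in the normal direction dominates these $O(r)$ errors after shrinking the neighborhood and enlarging $s$ and $C$, while maintaining $1+T>0$ and the required smoothness of the modified multiplier. Once this uniform determinant bound and the (easier) diagonal bounds are secured, the $2\times 2$ positivity criterion gives $\sh_{(1+\widetilde T)r}\ge 0$ for the modified multiplier $\widetilde T$, completing the converse.
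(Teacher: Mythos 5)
Your forward direction is fine, and the mechanism you have in mind for the converse (bump the multiplier in the normal direction; the tangential diagonal entry of the Hessian of $(1+T)r$ on $b\Omega$ is $(1+T)\sL_r$; the off-diagonal entry is unchanged if the bump is a function of $r$) is exactly the right one. The gap is in the step you lean on to control the off-diagonal entry. The hypothesis $-\sh_{(1+T)r}\le C\sL_r$ is a scalar inequality on the \emph{determinant} of the complex Hessian ($\sh_f=\det H_f$ and $\sL_r$ is the Levi form at the fixed vector $\langle r_w,-r_z\rangle$); you silently upgrade it to positive semidefiniteness of the Hermitian form $H_{(1+T)r}+C\Lambda$, with $\Lambda$ the rank-one form supported on $L$, and then apply Cauchy--Schwarz to that form to get $|H(L,\bar N)|^2\le\bigl(H(L,\bar L)+C\sL_r\bigr)H(N,\bar N)$. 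That inequality does not follow from the determinant bound: a lower bound on $H(L,\bar L)H(N,\bar N)-|H(L,\bar N)|^2$ gives no such control, and the inequality is simply false whenever $H(N,\bar N)<0$ and $H(L,\bar N)\ne 0$ --- a situation that actually occurs (in Example \ref{Ex:type4} one has $r_{w\bar w}=-A/2<0$). Since your displayed determinant estimate is derived from this bound, the chain from the stated hypothesis to nonnegativity of the modified determinant is broken. Deducing the form bound from the determinant bound is, modulo the easy implications, the hard direction of the theorem itself, so the argument cannot be patched by simply asserting it.

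The repair is to drop the off-diagonal estimate entirely and expand the determinant of the bumped Hessian directly: since the bump adds $\approx cs$ only to the $N\bar N$ entry and leaves the $L\bar L$ and $L\bar N$ entries unchanged on $b\Omega$, the new determinant equals the old one plus $cs\,H(L,\bar L)=cs\,(1+T)\sL_r$, which dominates $-C\sL_r$ once $s$ is large. This is precisely the paper's Proposition \ref{Prop:Hessian}: writing $h=1+Kr+T$, one has the exact identity $\sh_{(1+Kr+T)r}=2Kh\sL_r+\sh_{(1+T)r}$ on $b\Omega$, so the determinant hypothesis gives $\sh_\rho\ge(2Kh-C)\sL_r\ge 0$ for $K$ large, while $\rho_{w\bar w}=hr_{w\bar w}+2K|r_w|^2+2\re[r_wT_{\bar w}]>0$ for $K$ large supplies the diagonal positivity; the reverse implication is the $K=0$ case. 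No adapted frame, no Cauchy--Schwarz, and no extension off the boundary is needed --- the paper's notion of ``plurisubharmonic defining function on the boundary'' only requires positivity of the Hessian on $U\cap b\Omega$, so your closing concern about a two-sided neighborhood does not arise for this statement.
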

Thus the Levi form is a threshold that a determinant of the complex Hessian of $h$ needs to achieve in order for $\rho=r\cdot h$ to be a local plurisubharmonic defining function on the boundary.
``Spreading'' of positivity of the Hessian and other intermediate positivity conditions have been studied in \cite{HerMcN09} and  \cite{HerMcN2012}.

The last possible obstruction is the $|r_z|^2$ term. It is unknown whether this is actually an obstacle for producing a local plurisubharmonic defining function on the boundary and if so, what a geometric interpretation should be.  The Example \ref{Ex:type4} shows that this term needs to be considered, but does not necessarily prevent the domain from having a local plurisubharmonic defining function on the boundary. 

For domains in $\C^n$ the above conditions are still necessary as each $2\times2$ principal minor of the complex Hessian needs to be positive semidefinite on its own. However a few more obstructions arise. The same multiplier function must satisfy the corresponding differential equations for $z_1,...,z_{n-1}$ simultaneously. The necessary conditions imposed by positive semidefiniteness of higher order minors of the complex Hessian have not been studied.

Kohn \cite{Kohn99} showed that if the Diederich-Fornaess index is $1$ then the $\dbar$-Neumann operator is globally regular. For more general necessary condition needed for domains to have index $1$ see \cite{Liu19-2}. In general, plurisubharmonic defining functions are of importance in the study of the $\dbar$-Neumann operator as it is a sufficient condition for global regularity of $\dbar$-Neumann operator and the Bergman projection \cite{BoasStraube90}, \cite{BoaStr91}, \cite{BoasStraube1993}, \cite{BoasStraube99}. That is, the domains with plurisubharmonic defining function satisfy condition $R$.

This paper is a part of the authors PhD thesis. The author would like to thank his adviser Jeffery D. McNeal for bringing this problem to his attention and for many helpful discussions.

\section{Definitions and Notation}

We denote the partial derivatives with a subscript, e.g. $r_{z_j} =\frac{\partial r}{\partial z_j}$.

We say that $r$ is a defining function for a domain $\Omega\subset \C^2$ provided that  $\Omega=\{(z,w)\in \C^2 : r(z,w)<0\}\subset \C^2$ and $\nabla r\neq 0$ on the boundary.

Let $p\in b\Omega$.
By rotation and translation we may assume that $p=(0,0)$ is the origin. By the Implicit function theorem we may assume that  $r$ is of the form $$r(z,w)=\im w+ F(z,\bar z, \re w,\im w) \ ,$$ where the terms in $F$ are of degree at least $2$. The crucial property is that $r_w\neq0$ is non-vanishing in some neighborhood of the origin.

The complex tangent space $T^{1,0}_p b\Omega$  to $b\Omega$ at $p$ is defined to be
all $\xi=(\xi_1,\xi_2)\in\C^2$ satisfying $$\sum_{j=1}^2\frac{\partial r}{\partial z_j}(p)\xi_j=0.$$
$\Omega$ is pseudoconvex if

$$\sL_r(\xi)=\sum_{j,k=1}^2\frac{\partial^2 r}{\partial z_j\partial\bar z_k}(p)\xi_j\bar\xi_k\geq 0\qquad\text{ for all } \xi\in T^{1,0}_pb\Omega,$$
for all $p\in b\Omega$. 
In $\C^2$ the tangent space is $1$-dimension and $\langle r_w , -r_z\rangle\in T^{1,0}_pb\Omega$, so define 
$$\sL_r= \sL_r(\langle r_w , -r_z\rangle)=r_{z\bar z}|r_w|^2 +r_{w\bar w}|r_z|^2 -2\re[r_{z\bar w}r_w r_{\bar z} ] .$$

A function $r$ is plurisubharmonic if 
$$\sum_{j,k=1}^2\frac{\partial^2 r}{\partial z_j\partial\bar z_k}(p)\xi_j\bar\xi_k\geq 0\qquad\text{ for all } \xi\in \C^2 . 
$$
Alternative description of plurisubharmonic function $r$ is that a complex Hessian matrix $H_r$ is positive semidefinite. This is equivalent to the diagonal entries $r_{z\bar z},r_{w\bar w}\geq0$ and the determinant $\sh_r=det(H_r)=r_{z\bar z}r_{w\bar w}-|r_{z\bar w}|^2\geq0$.

We will consider the local versions of the above definitions, i.e. the above definitions only need to hold on a neighborhood of the origin inside the boundary.

Throughout, $r$ denotes the defining function of $\Omega$, and $\rho$ denotes a local plurisubharmonic defining function, provided one exists.
In that case we can write $\rho=r\cdot h$, for some real-valued, multiplier function $h$, which does not vanish in a neighborhood of the origin.

Finally, we use $\mathcal O$ and $\littleO$ as big O and little O notation respectively.

\section{Necessary Conditions}

Let $\Omega\subset \C^2$ be a domain with a defining function $r(z,w)= \im w +F(z,\bar z, \re w,\im w)$ where the terms in $F$ are of order at least $2$. Suppose that $\Omega$ admits a local plurisubharmonic defining function on the boundary $\rho=r\cdot h$. Write $h=1+Kr+T$ where $K\in \R$ and $T$ is a real-valued function.

We wish to analyze the properties of $\rho$, derivatives of $\rho$, and the complex Hessian of $\rho$ in terms of $r$ and the complex Hessian of $r$.

The first key proposition is 
\begin{proposition}\label{Prop:Hessian}
\[\sh_\rho=\sh_{hr}=\sh_{(1+Kr+T)r}=2Kh\sL_r+\sh_{(1+T)r}\]
\end{proposition}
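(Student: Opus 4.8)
The plan is to compute $\sh_\rho=\det H_\rho$ directly from the product structure $\rho=hr$. First I would regroup the multiplier as $h=(1+T)+Kr$, so that
\[\rho=(1+T)r+Kr^2=\sigma+Kr^2,\qquad \sigma:=(1+T)r,\]
isolating the single term $Kr^2$ that carries the constant $K$. The complex Hessian is additive, so $H_\rho=H_\sigma+K\,H_{r^2}$, and the product rule gives $H_{r^2}=2r\,H_r+2N$ with the rank-one matrix $N:=\nabla r\,(\overline{\nabla r})^{\mathsf T}$, where $\nabla r=(r_z,r_w)^{\mathsf T}$. Since $\sh=\det H$ is \emph{not} additive, I would pass to the polarization of the $2\times2$ determinant: $\det(A+B)=\det A+\det B+D(A,B)$, where $D(A,B):=\operatorname{tr}A\,\operatorname{tr}B-\operatorname{tr}(AB)$ is the symmetric, bilinear mixed discriminant. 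This reduces the whole computation to $\sh_\sigma$, a few mixed discriminants, and $\det N=0$.

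The conceptual crux is the single identity
\[D(H_r,N)=\sL_r,\]
which is exactly the ``free positivity'' advertised in the introduction: the mixed discriminant of the complex Hessian against the gradient rank-one matrix reproduces the Levi form. I would prove this first, by expanding in components to obtain $r_{z\bar z}|r_w|^2+r_{w\bar w}|r_z|^2-r_{z\bar w}r_w r_{\bar z}-r_{w\bar z}r_z r_{\bar w}$ and using the reality of $r$ (so $r_{w\bar z}=\overline{r_{z\bar w}}$) to collapse the last two terms into $-2\re[r_{z\bar w}r_w r_{\bar z}]$, which is precisely $\sL_r$. Two supporting facts finish the bookkeeping: $N$ is rank one, so $\det N=0$; and for any cross-term matrix of the form $M=\nabla u\,(\overline{\nabla r})^{\mathsf T}+\nabla r\,(\overline{\nabla u})^{\mathsf T}$ one has $D(M,N)=0$, since each rank-one summand of $M$ shares either a column $\nabla r$ or a row $(\overline{\nabla r})^{\mathsf T}$ with $N$, and the mixed discriminant of two rank-one matrices with a repeated factor vanishes.

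With these in hand I would assemble the expansion on the boundary $b\Omega=\{r=0\}$, where the plurisubharmonicity condition is tested and every term carrying an explicit factor of $r$ drops out. There $H_{r^2}=2N$, while $H_\sigma=hH_r+M$ for a gradient cross-term matrix $M$ of the type above (using $1+T=h$ on $\{r=0\}$). Hence $D(H_\sigma,N)=h\,D(H_r,N)+D(M,N)=h\,\sL_r$, and
\[\sh_\rho=\det(H_\sigma+2KN)=\sh_\sigma+2K\,D(H_\sigma,N)=2Kh\sL_r+\sh_{(1+T)r},\]
which is the claim.

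I expect the main obstacle to be the bookkeeping rather than any single hard step: one must verify that all the off-diagonal contributions reorganize, through the reality relation $r_{w\bar z}=\overline{r_{z\bar w}}$, into the real combination defining $\sL_r$, and keep track of which terms are genuinely $O(r)$ and hence vanish on $b\Omega$. Proving $D(H_r,N)=\sL_r$ cleanly at the outset is what makes the Levi form emerge as the exact coefficient of $2Kh$; everything else is organized bilinearity.
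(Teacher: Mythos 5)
Your argument is correct, and it takes a genuinely different route from the paper's proof, which is a direct component-wise expansion of $\rho_{z\bar z}$, $\rho_{w\bar w}$, $\rho_{z\bar w}$ and of the determinant, with the Levi form emerging only after a series of explicit cancellations. You instead organize everything around the polarization $\det(A+B)=\det A+\det B+D(A,B)$ with $D(A,B)=\operatorname{tr}A\operatorname{tr}B-\operatorname{tr}(AB)$, the rank-one matrix $N=\nabla r\,(\overline{\nabla r})^{\mathsf T}$, and the single identity $D(H_r,N)=\sL_r$; the remaining steps ($\det N=0$, $D(M,N)=0$ for gradient cross-terms, restriction to $b\Omega$ where $H_{r^2}=2N$ and $1+T=h$) all check out. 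This is essentially a rigorous version of the remark the paper places immediately after the proposition, where the identity is restated as $\det(A+B)=\det A+\det B+\det A\cdot\operatorname{tr}(A^{-1}B)$ with $\det(A)\operatorname{tr}(A^{-1}B)=h\sL_r$ on the boundary; your formulation via $\operatorname{tr}A\operatorname{tr}B-\operatorname{tr}(AB)$ is preferable there since it needs no invertibility of $A$. What the paper's computation buys is complete self-containedness at the cost of bookkeeping; what yours buys is a clean isolation of where the Levi form comes from (the mixed discriminant of $H_r$ against the gradient rank-one matrix) and an argument that transfers verbatim to the $2\times2$ minors used for the $\C^n$ analogue (Proposition \ref{Prop:nHessian}). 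One small point worth making explicit if you write this up: the identity, in both your version and the paper's, is an identity on $b\Omega$ only (terms carrying a factor of $r$, such as $rp_{z\bar w}$ and $2rH_r$ inside $H_{r^2}$, are discarded), and you are right to say so where the paper leaves it implicit.
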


\begin{proof}
Let $h=1+Kr+T$ and $p=1+T$.
Note that on $b\Omega$, $h=p$.

\begin{align*}
\rho_{z\bar z} =& r_{z\bar z}h +2\re[r_{\bar z}h_z]= r_{z\bar z}h +2\re[r_{\bar z}(K r_z + T_z)] \\
=& r_{z\bar z}h + 2K|r_z|^2 + 2\re[r_{\bar z}T_z]\\
\\
\rho_{w\bar w} =& r_{w\bar w}h+2\re[r_wh_{\bar w}]=r_{w\bar w}h +2\re[r_w(K r_{\bar w}+T_{\bar w})]\\
=& r_{w\bar w}h +2K|r_w|^2 +2\re[r_wT_{\bar w}]\\
\\
\rho_{z\bar w}=& r_{z\bar w}h +r_zh_{\bar w}+r_{\bar w}h_z = r_{z\bar w}h +r_z(K r_{\bar w} +T_{\bar w}) + r_{\bar w}(K r_z +T_z)\\
=& r_{z\bar w}h +2K r_z r_{\bar w} + r_zT_{\bar w} + r_{\bar w}T_z \\
\end{align*}

Note that $\rho_{w\bar w}$ is non-vanishing and positive for $K>0$ big enough.

The determinant of the complex Hessian is:
\begin{align*}
\sh_\rho=&\rho_{z\bar z}\rho_{w\bar w} - |\rho_{z\bar w}|^2\\
=&( r_{z\bar z}h +  2\re[r_{\bar z}T_z]+2K|r_z|^2 )( r_{w\bar w}h +2\re[r_wT_{\bar w}]+ 2K|r_w|^2)
-|r_{z\bar w}h + r_zT_{\bar w} + r_{\bar w}T_z+2K r_z r_{\bar w}|^2\\
=& (r_{z\bar z}h+  2\re[r_{\bar z}T_z])(2K|r_w|^2) + ( r_{w\bar w}h +2\re[r_wT_{\bar w}])(2K|r_z|^2 )\\
 &+( r_{z\bar z}h +  2\re[r_{\bar z}T_z])( r_{w\bar w}h +2\re[r_wT_{\bar w}]) +\cancel{ 4K^2|r_z|^2|r_w|^2}\\
 &-|r_{z\bar w}h + r_zT_{\bar w} + r_{\bar w}T_z|^2 \cancel{- 4K^2|r_z|^2|r_w|^2} -2\re[(2K r_{\bar z}r_w)(r_{z\bar w}h + r_zT_{\bar w} + r_{\bar w}T_z)]\\
=& 2Khr_{z\bar z}|r_w|^2+ \cancel{4K\re[r_{\bar z}T_z r_w r_{\bar w}]} + 2Khr_{w\bar w}|r_z|^2+\cancel{4K\re[r_w T_{\bar w}r_zr_{\bar z}]} + (pr)_{z\bar z}(pr)_{w\bar w}\\
& - |(pr)_{z\bar w}|^2- 4Kh\re[r_{\bar z}r_wr_{z\bar w}] \cancel{-4K\re[r_{\bar z}r_w r_z T_{\bar w}]} 
\cancel{- 4K\re[r_{\bar z}r_w r_{\bar w}T_z]}\\
=&2Kh \bigg[r_{z\bar z}|r_w|^2+r_{w\bar w}|r_z|^2 -2\re[r_{z\bar w}r_{\bar z}r_w]\bigg] +(pr)_{z\bar z}(pr)_{w\bar w}-|(pr)_{z\bar w}|^2\\
=& 2Kh\sL_r +\sh_{pr} \ .
\end{align*}
\end{proof}

\begin{remark}
If $A$ is the complex Hessian of $p\cdot r$ and $B$ is a complex Hessian of $r^2$, the proposition is equivalent to:
\\For $2\times2$ matrices $A$ and $B$,
\[det(A+B)= det A+det B +{det A}\cdot tr(A^{-1}B) \ ,\]
where  $det B=0$ and $det(A)tr(A^{-1}B)=h\sL_r$ when restricted to the boundary.
\end{remark}

Proposition \ref{Prop:Hessian} uncovers positivity of the Levi form that is ``hidden'' in the determinant of the complex Hessian. Thus the difficulty of producing a local plurisubharmonic defining function on the boundary lies in the construction of a real-valued function $T$ such that negativity of $\sh_{(1+T)r}$, the determinant of the Hessian of $(1+T)r$, can be controlled by the Levi form.
An immediate corollary to Proposition \ref{Prop:Hessian} is
\begin{theorem}\label{T:Hessiantresh}
$\Omega$ admits a local plurisubharmonic defining function on the boundary if and only if there exists a real-valued function $T$ and a constant $C>0$ such that $-\sh_{(1+T)r}\leq C\sL_r$.
\end{theorem}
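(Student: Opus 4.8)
The plan is to read the equivalence directly off Proposition \ref{Prop:Hessian}, whose identity $\sh_\rho = 2Kh\sL_r + \sh_{(1+T)r}$ isolates a reservoir of Levi-form positivity that can be amplified by enlarging $K$. Two preliminary reductions organize the argument. First, for the $2\times 2$ complex Hessian of $\rho$, positive semidefiniteness on the boundary is equivalent to $\rho_{w\bar w}>0$ together with $\sh_\rho\geq 0$: the computation in the proof of Proposition \ref{Prop:Hessian} already shows $\rho_{w\bar w}=r_{w\bar w}h+2K|r_w|^2+2\re[r_wT_{\bar w}]>0$ for $K$ large, since $r_w\neq 0$ near the origin, and then $\sh_\rho\geq 0$ forces $\rho_{z\bar z}\geq 0$ automatically via $\rho_{z\bar z}\rho_{w\bar w}=\sh_\rho+|\rho_{z\bar w}|^2\geq 0$. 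Second, $\Omega$ is pseudoconvex, so $\sL_r\geq 0$ on the boundary; in the necessity direction this is in any case forced by the existence of a plurisubharmonic defining function on the boundary.

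For sufficiency I would assume a real-valued $T$ and $C>0$ with $-\sh_{(1+T)r}\leq C\sL_r$, set $h=1+Kr+T$ and $\rho=rh$, and choose $K$ afterward. Applying Proposition \ref{Prop:Hessian} and the hypothesis gives, on the boundary where $h=1+T$, the bound $\sh_\rho = 2Kh\sL_r+\sh_{(1+T)r}\geq (2Kh-C)\sL_r$. Since $\sL_r\geq 0$ and $1+T>0$ on a compact neighborhood of the origin in $b\Omega$, one fixed $K$ makes $2Kh-C\geq 0$ there, whence $\sh_\rho\geq 0$. Enlarging $K$ if necessary to keep $\rho_{w\bar w}>0$, the preliminary reduction then yields $H_\rho\geq 0$ on the boundary, so $\rho=rh$ is the desired local plurisubharmonic defining function on the boundary.

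For necessity I would start from a given local plurisubharmonic defining function $\rho=rh$ on the boundary and simply take $K=0$ and $T=h-1$. Then $1+T=h$, so $(1+T)r=\rho$ and $\sh_{(1+T)r}=\sh_\rho\geq 0$ on the boundary; combined with pseudoconvexity $\sL_r\geq 0$, the inequality $-\sh_{(1+T)r}\leq 0\leq C\sL_r$ holds for every $C>0$. Thus the condition is extracted essentially for free, which reflects that the content of the theorem lives entirely in the sufficiency direction.

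The step I expect to be most delicate is not computational but bookkeeping around admissibility: I must ensure $h$ remains a nonvanishing positive multiplier so that $\rho=rh$ genuinely defines $\Omega$ rather than its complement, which forces $1+T>0$ near the origin and is what permits a uniform choice of $K$ over a compact boundary neighborhood. The other point requiring care is that sufficiency really uses pseudoconvexity: if $\sL_r$ were negative somewhere on $b\Omega$, the reservoir term $2Kh\sL_r$ would turn negative and enlarging $K$ would destroy, rather than create, positivity of $\sh_\rho$. Granting these, the theorem is indeed an immediate corollary of Proposition \ref{Prop:Hessian}.
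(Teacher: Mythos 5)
Your proof is correct and is essentially the argument the paper intends: the paper presents this theorem as an immediate corollary of Proposition \ref{Prop:Hessian} without writing out details, and your write-up supplies exactly those details (choosing $K$ large to let $2Kh\sL_r$ absorb $-\sh_{(1+T)r}\leq C\sL_r$, checking $\rho_{w\bar w}>0$, and taking $T=h-1$ for necessity). The admissibility and pseudoconvexity points you flag are handled implicitly in the paper's standing assumptions, so there is no gap.
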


\bigskip
The hardest term to control in the determinant of the complex Hessian $$\sh_r=r_{z\bar z}r_{w\bar w}-|r_{z\bar w}|^2$$ is the off-diagonal $-|r_{z\bar w}|^2$ term. Next, the necessary bounds, on the terms in the determinant of the complex Hessian, for the function to be plurisubharmonic are recorded.

\begin{lemma}\label{nece1}
Suppose $\rho$ is a local plurisubharmonic defining function. Then
\begin{enumerate}
\item $\frac{2h\sL_r}{|r_w|^2} +\frac{2\rho_{w\bar w}|r_z|^2}{|r_w|^2} \geq \rho_{z\bar z} $
\item $\rho_{z\bar z}\geq \frac{\rho_{w\bar w}|r_z|^2}{2|r_w|^2}-\frac{h\sL_r}{|r_w|^2}$ 
\item $\rho_{z\bar z}\geq \frac{h\sL_r}{2|r_w|^2} - \frac{\rho_{w\bar w}|r_z|^2}{|r_w|^2}$
\item $|\rho_{z\bar w}|^2\leq \frac{2\rho_{w\bar w}h\sL_r}{|r_w|^2} +\frac{2\rho_{w\bar w}^2|r_z|^2}{|r_w|^2}$ 
\end{enumerate}
As a consequence $\rho_{z\bar z},|\rho_{z\bar w}|^2 \leq C(\sL_r+|r_z|^2)$, or equivalently  $\rho_{z\bar z},|\rho_{z\bar w}|^2 =\mathcal O(\sL_r +|r_z|^2)$ in $U\cap b\Omega$ for some neighborhood $U$ of the origin.
\end{lemma}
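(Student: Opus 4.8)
The plan is to deduce all of (1)--(4) from two facts evaluated on $U\cap b\Omega$: an identity that makes the Levi form $\sL_r$ appear, and the determinant inequality coming from $\sh_\rho\ge 0$. To keep the algebra transparent I would abbreviate $X=\rho_{z\bar z}|r_w|^2$, $Y=\rho_{w\bar w}|r_z|^2$, and $L=h\sL_r$. Since $\rho$ is plurisubharmonic on the boundary we have $\rho_{z\bar z},\rho_{w\bar w}\ge 0$, hence $X,Y\ge 0$; and since a domain admitting such a $\rho$ is pseudoconvex and $h>0$, we have $L\ge 0$.

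The first and main step is the identity
\[ \rho_{z\bar z}|r_w|^2+\rho_{w\bar w}|r_z|^2-2\re[\rho_{z\bar w}r_w r_{\bar z}]=h\sL_r \qquad\text{on }b\Omega, \]
equivalently $2\re[\rho_{z\bar w}r_w r_{\bar z}]=X+Y-L$. The left-hand side is exactly the complex Hessian of $\rho$ evaluated on the tangent direction $\langle r_w,-r_z\rangle$, so this says that the restricted Hessian of $\rho$ is $h$ times the Levi form of $r$. I would prove it by substituting the formulas for $\rho_{z\bar z},\rho_{w\bar w},\rho_{z\bar w}$ computed in the proof of Proposition \ref{Prop:Hessian} (on $b\Omega$ the terms carrying a factor of $r$ drop out) and checking that the $K$-terms and the $T_z,T_{\bar w}$ cross terms cancel, leaving $h\,(r_{z\bar z}|r_w|^2+r_{w\bar w}|r_z|^2-2\re[r_{z\bar w}r_w r_{\bar z}])=h\sL_r$. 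This is the conceptual heart of the lemma: it is the mechanism that injects $\sL_r$ into the estimates, and I expect the bookkeeping of these cancellations to be the only genuine computation.

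The second step is the Cauchy--Schwarz bound $|\rho_{z\bar w}|^2\le\rho_{z\bar z}\rho_{w\bar w}$, which holds because $\sh_\rho\ge 0$; it gives $|2\re[\rho_{z\bar w}r_w r_{\bar z}]|\le 2|\rho_{z\bar w}||r_w||r_z|\le 2\sqrt{XY}$. Combined with the identity this yields $|X+Y-L|\le 2\sqrt{XY}$, and the four inequalities are then routine weighted AM--GM estimates. Using $X+Y-L\le 2\sqrt{XY}$ together with $4\sqrt{XY}\le X+4Y$ gives $X\le 2L+2Y$, which is (1) after dividing by $|r_w|^2$; the same upper bound with the weighting $4\sqrt{XY}\le 4X+Y$ gives $Y\le 2X+2L$, which is (2); and $X+Y-L\ge-2\sqrt{XY}\ge-(X+Y)$ gives $L\le 2(X+Y)$, which is (3). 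Inequality (4) then follows by feeding (1) into $|\rho_{z\bar w}|^2\le\rho_{z\bar z}\rho_{w\bar w}$, since the right-hand side of (1) times $\rho_{w\bar w}$ is precisely $\tfrac{2\rho_{w\bar w}h\sL_r}{|r_w|^2}+\tfrac{2\rho_{w\bar w}^2|r_z|^2}{|r_w|^2}$.

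Finally, for the $\mathcal O$-statement I would note that on a small neighborhood $U$ of the origin $h$ and $\rho_{w\bar w}$ are continuous, hence bounded, while $|r_w|^2\ge c>0$ because $r_w\ne 0$ there. Bounding the right-hand sides of (1) and (4) using these facts, and using $h\sL_r\le C\sL_r$ (which needs $\sL_r\ge 0$, i.e.\ pseudoconvexity), gives $\rho_{z\bar z},|\rho_{z\bar w}|^2\le C(\sL_r+|r_z|^2)$ on $U\cap b\Omega$. The only points requiring care are the cancellations in the identity and the choice of AM--GM weights that reproduce the exact constants in (1)--(4); everything after the identity is elementary.
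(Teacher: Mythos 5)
Your proof is correct and rests on the same two pillars as the paper's own argument: the identity $\sL_\rho = h\sL_r$ on $b\Omega$ and the positive semidefiniteness of the complex Hessian of $\rho$. Your route through $|\rho_{z\bar w}|^2\le\rho_{z\bar z}\rho_{w\bar w}$ plus weighted AM--GM is equivalent to the paper's device of evaluating the Hessian on the vectors $\langle 2^{\pm1}r_w,-r_z\rangle$ and $\langle r_w,r_z\rangle$, and it reproduces the same constants.
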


\begin{proof}
Since $\rho$ is plurisubharmonic, the complex Hessian is positive  semidefinite for all vectors in $\C^2$. The complex Hessian for a vector
 $\langle 2^n r_w, -r_z\rangle$ is 
\begin{align*}
0\leq\sh (\langle 2^n r_w, -r_z\rangle) =&  \rho_{z\bar z}2^{2n}|r_w|^2 +\rho_{w\bar w}|r_z|^2 -2\cdot 2^n \re[\rho_{z\bar w} r_w r_{\bar z}]\\
=&  2^{2n}\rho_{z\bar z}|r_w|^2 +\rho_{w\bar w}|r_z|^2  + 2^n (\sL_\rho - \rho_{z\bar z}|r_w|^2 - \rho_{w\bar w}|r_z|^2)\\
=& 2^n(2^n-1) \rho_{z\bar z}|r_w|^2 + (1-2^n)\rho_{w\bar w}|r_z|^2 - 2^n \sL_\rho\\
=& 2^n \left [ (2^n-1) \rho_{z\bar z}|r_w|^2 +\frac{(1-2^n)}{2^n}\rho_{w\bar w}|r_z|^2 + \sL_\rho \right]
\end{align*}
Therefore
\[(2^n-1) \rho_{z\bar z}|r_w|^2 +\frac{(1-2^n)}{2^n} \rho_{w\bar w}|r_z|^2+ \sL_\rho \geq0\]
For $n=1,-1$, using the fact that $\sL_\rho=h\sL_r$ on $b\Omega$, inequalities (2) and (1) are obtained:
\[\rho_{z\bar z} |r_w|^2 - \frac12 \rho_{w\bar w}|r_z|^2 + h\sL_r\geq 0\]
\[ -\frac12\rho_{z\bar z}|r_w|^2 + \rho_{w\bar w}|r_z|^2 +h\sL_r \geq0\]
For (3), apply the complex hessian to the vector $\langle r_w, r_z\rangle$:
\begin{align*}
0\leq \sh(\langle r_w,r_z\rangle) =& \rho_{z\bar z}|r_w|^2+\rho_{w\bar w}|r_z|^2 +2\re[\rho_{z\bar w}r_{\bar z}r_w]\\
=& \rho_{z\bar z}|r_w|^2 + \rho_{w\bar w}|r_z|^2 -\sL_\rho +\rho_{z\bar z}|r_w|^2+\rho_{w\bar w}|r_z|^2\\
=&2\rho_{z\bar z}|r_w|^2 + 2\rho_{w\bar w}|r_z|^2 -\sL_\rho
\end{align*}
To obtain inequality (4), use (1) and $|\rho_{z\bar w}|^2 \leq \rho_{w\bar w}\rho_{z\bar z}$.
\end{proof}
	
\begin{remark}
The Example \ref{Ex:type4}  shows that we cannot improve the inequality (4) to $|\rho_{z\bar w}|^2\leq C \sL_r$.
\end{remark}

Using the bounds above, we prove the second key proposition.
\begin{proposition}
Suppose that $\rho=r\cdot h$ is a local plurisubharmonic defining function on the boundary. Then 
\begin{equation}
\frac{\partial}{\partial z} \log h = -\frac{\partial}{\partial z} \log r_{\bar w} +E \notag, 
\end{equation} where $|E|^2\leq C(\sL_r+|r_z|^2)$. 

If we write $h=1+Kr+T$, then 
\begin{equation} 
\frac{\partial}{\partial z} \log (1+T) = -\frac{\partial}{\partial z} \log r_{\bar w} +E \notag, 
\end{equation}
 where $|E|^2\leq C(\sL_r+|r_z|^2)$. 
\end{proposition}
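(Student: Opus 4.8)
The plan is to read the identity directly off the formula for the mixed second derivative $\rho_{z\bar w}$ computed in Proposition \ref{Prop:Hessian}, and then to control the resulting error term using the consequence of Lemma \ref{nece1} that $|\rho_{z\bar w}|^2 = \mathcal O(\sL_r + |r_z|^2)$ on $U\cap b\Omega$. First I would recall that
$$\rho_{z\bar w} = r_{z\bar w}h + r_{\bar w}h_z + r_z h_{\bar w}.$$
Since $r_w\neq 0$ and $h\neq 0$ in a neighborhood of the origin, the product $r_{\bar w}h$ is bounded away from zero there, so I may divide by it. Doing so and using $(r_{\bar w})_z = r_{z\bar w}$ gives
$$\frac{\rho_{z\bar w}}{r_{\bar w}h} = \frac{r_{z\bar w}}{r_{\bar w}} + \frac{h_z}{h} + \frac{r_z h_{\bar w}}{r_{\bar w}h},$$
which, after recognizing $\frac{\partial}{\partial z}\log h = h_z/h$ and $\frac{\partial}{\partial z}\log r_{\bar w} = r_{z\bar w}/r_{\bar w}$, rearranges into the desired identity with
$$E = \frac{\rho_{z\bar w}}{r_{\bar w}h} - \frac{r_z h_{\bar w}}{r_{\bar w}h}.$$

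Next I would estimate $E$ term by term. Since $|r_{\bar w}h|$ is bounded below near the origin, the first term satisfies $\left|\frac{\rho_{z\bar w}}{r_{\bar w}h}\right|^2 \leq C|\rho_{z\bar w}|^2 \leq C(\sL_r + |r_z|^2)$ by Lemma \ref{nece1}. For the second term, $h_{\bar w} = Kr_{\bar w} + T_{\bar w}$ is bounded near the origin, so $\left|\frac{r_z h_{\bar w}}{r_{\bar w}h}\right|^2 \leq C|r_z|^2$. Combining via the triangle inequality yields $|E|^2 \leq C(\sL_r + |r_z|^2)$, which proves the first formula.

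For the second formula I would pass to the boundary, where $r=0$ and hence $h = 1+T$. Writing $h_z = Kr_z + T_z$ gives
$$\frac{\partial}{\partial z}\log h = \frac{Kr_z + T_z}{1+T} = \frac{\partial}{\partial z}\log(1+T) + \frac{Kr_z}{1+T}$$
on $b\Omega$, and since $\left|\frac{Kr_z}{1+T}\right|^2 \leq C|r_z|^2$, absorbing this term into $E$ produces the stated identity for $\log(1+T)$ with an error of the same order.

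The computation is essentially algebraic once the mixed-derivative formula and the bound on $|\rho_{z\bar w}|^2$ are in hand; the only points needing care are that division by $r_{\bar w}$ and $h$ is legitimate (both are nonvanishing near the origin) and that each nuisance term carries a factor of $r_z$ and is therefore $\mathcal O(|r_z|)$, so it may be swept into $E$ without affecting the claimed bound. The genuinely hard input, namely controlling $|\rho_{z\bar w}|^2$ by the Levi form plus $|r_z|^2$, has already been supplied by Lemma \ref{nece1}, so I expect no serious obstacle beyond bookkeeping.
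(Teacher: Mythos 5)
Your proof is correct and follows essentially the same route as the paper: both arguments start from the bound $|\rho_{z\bar w}|^2\leq C(\sL_r+|r_z|^2)$ of Lemma \ref{nece1}, expand $\rho_{z\bar w}=r_{z\bar w}h+r_zh_{\bar w}+r_{\bar w}h_z$, divide by the nonvanishing quantity $r_{\bar w}h$, and absorb every term carrying a factor of $r_z$ into $E$. The only difference is presentational — you write $E$ explicitly and apply the triangle inequality, whereas the paper peels terms off via $|A+B|^2\geq \tfrac12|A|^2-|B|^2$ — and your handling of the second identity (passing to $b\Omega$ where $h=1+T$ and sweeping $Kr_z/(1+T)$ into the error) matches the paper's as well.
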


\begin{proof}

We use the following fact in the subsequent calculations:
\[|A+B|^2=|A|^2+|B|^2+2\re[A\bar B] \geq \frac12|A|^2 - |B|^2 + \frac12|A|^2 +2|B|^2 - 2|A||B|\overset{\text{AM-GM}}{\geq}  \frac12|A|^2 - |B|^2\]

\[
C(\sL_r+|r_z|^2) \geq  |\rho_{z\bar w}|^2 = |r_{z\bar w}h + r_z h_{\bar w} + r_{\bar w}h_z|^2
\geq \frac12|r_{z\bar w}h+r_{\bar w}h_z|^2 - |r_z|^2|h_{\bar w}|^2
\]
\[
\tilde C(\sL_r+|r_z|^2) \geq|r_{z\bar w}h+r_{\bar w}h_z|^2 = |r_w||h|\left| \frac {h_z}{h}+ \frac{r_{z\bar w}}{r_{\bar w}}\right|^2
=|r_w||h|\left|\frac{\partial}{\partial z} \log h + \frac{\partial}{\partial z} \log r_{\bar w}\right|^2
\]
\[\hat C (\sL_r+|r_z|^2)\geq \left|\frac{\partial}{\partial z} \log h + \frac{\partial}{\partial z} \log r_{\bar w}\right|^2\]
Note that the logarithms are well defining in a small neighborhood of the origin, i.e. there is a branch cut consistent with $h$ and $r_{\bar w}$.

\bigskip
Write $h=1+Kr+T$, and notice that $h=1+T$ on $b\Omega$.
\[C(\sL_r +|r_z|^2)\geq  |r_{z\bar w}h +r_z h_{\bar w}+ r_{\bar w} (r_z+ T_z)|^2 \geq \frac12 |r_{z\bar w}(1+T)+r_{\bar w}T_z|^2 -|r_z|^2|r_{\bar w} +h_{\bar w}|^2\]
The conclusion follows from the calculation above.
\end{proof}

Therefore, we have obtained two necessary conditions on the multiplier $h=1+Kr+T$ to produce a local plurisubharmonic defining function on the boundary $\rho$. Namely:
\begin{theorem}\label{thm:nec}
Suppose that $\rho=r\cdot h$ is a local plurisubharmonic defining function on the boundary and $h=1+Kr+T$. Then
\begin{enumerate}
\item $\frac{\partial}{\partial z} \log (1+T) = -\frac{\partial}{\partial z} \log r_{\bar w} +E$ where $|E|^2\leq C(\sL_r+|r_z|^2)$ for some $C>0$.
\item $-\sh_{(1+T)r}\leq C\sL_r$ for some $C>0$.
\end{enumerate}
\end{theorem}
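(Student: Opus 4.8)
The plan is to recognize that both conclusions have already been assembled in the two results preceding the statement, so the proof reduces to packaging them. Conclusion (1) is verbatim the content of the immediately preceding proposition: specializing that result to the decomposition $h=1+Kr+T$ and using that $h=1+T$ on $b\Omega$ yields $\frac{\partial}{\partial z}\log(1+T)=-\frac{\partial}{\partial z}\log r_{\bar w}+E$ with $|E|^2\le C(\sL_r+|r_z|^2)$. Thus the first task is simply to invoke that proposition; no new computation is required, since the estimate $|\rho_{z\bar w}|^2=\mathcal O(\sL_r+|r_z|^2)$ from Lemma \ref{nece1} has already been converted into the logarithmic form there.

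For conclusion (2) I would start from the Hessian identity of Proposition \ref{Prop:Hessian}, namely $\sh_\rho=2Kh\,\sL_r+\sh_{(1+T)r}$, which holds as an algebraic identity. Since $\rho$ is a local plurisubharmonic defining function, its complex Hessian is positive semidefinite on $b\Omega$ near the origin, so in particular its determinant satisfies $\sh_\rho\ge 0$ there. Rearranging the identity then gives $-\sh_{(1+T)r}\le 2Kh\,\sL_r$ on $b\Omega$, which is the desired inequality up to identifying a genuine constant.

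To extract a constant $C>0$ I would use that $h$ is continuous and non-vanishing, hence bounded on a compact neighborhood $U$ of the origin, together with pseudoconvexity, which supplies $\sL_r\ge 0$. If $K>0$ one takes $C=2K\sup_U h$ (recalling $h>0$ since $\rho=rh$ defines the same domain); if $K\le 0$ then $2Kh\,\sL_r\le 0\le C\sL_r$ for any $C>0$, so any positive constant works. In either case $-\sh_{(1+T)r}\le C\sL_r$ on $U\cap b\Omega$, which is (2). Equivalently, this is exactly the forward implication of Theorem \ref{T:Hessiantresh}, so one could instead cite that theorem directly.

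There is no genuine obstacle here: the theorem is a restatement that collects results already in hand. The only point requiring mild care is that both conclusions are assertions on the boundary, where $h=1+T$ and where semidefiniteness of the Hessian forces $\sh_\rho\ge0$, and that the constant in (2) must be chosen positive irrespective of the sign of $K$ (handled by $\sL_r\ge0$). All the substantive work—the $|\rho_{z\bar w}|^2=\mathcal O(\sL_r+|r_z|^2)$ bound behind $E$, and the decomposition isolating the Levi form in the determinant—has been carried out in Lemma \ref{nece1}, the preceding proposition, and Proposition \ref{Prop:Hessian}.
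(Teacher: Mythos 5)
Your proposal is correct and matches the paper's (implicit) proof: the theorem is stated as a summary of the preceding results, with part (1) being exactly the second assertion of the log-derivative proposition and part (2) being the forward implication of Theorem \ref{T:Hessiantresh}, obtained from Proposition \ref{Prop:Hessian} together with $\sh_\rho\ge 0$ on $b\Omega$. Your extra care about the sign of $K$ and the positivity of $h$ and $\sL_r$ is sound and consistent with what the paper takes for granted.
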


\section{Sufficient condition}
Suppose that there exists a real-valued function $T$ such that 
 \begin{equation}\label{E:logder4}
 \frac{\partial}{\partial z} \log (1+T) = -\frac{\partial}{\partial z} \log r_{\bar w} +E 
 \end{equation}
  where $|E|^2\leq C(\sL_r+|r_z|^2)$ for some $C>0$.

Let $p=1+T$.
The Equation \eqref{E:logder4} is equivalent to \[T_z= -\frac{p r_{z \bar w}}{r_{\bar w}}+pE\ . \]
Compute the second order derivatives:
\begin{align}\label{E:rhozz}
 (rp)_{z\bar z}&= pr_{z\bar z}+2\re[r_{\bar z}T_z]=pr_{z\bar z} +2\re[r_{\bar z}(-\frac{p r_{z \bar w}}{r_{\bar w}}+pE)] \notag\\ 
 &=p\left [r_{z\bar z}-2\re[r_{z\bar w}\frac{r_{\bar z}}{r_{\bar w}}]\right] +2p\re[r_{\bar z}E] \notag\\
 &= p\left[r_{z\bar z}-2\re[r_{z\bar w}\frac{r_{\bar z}}{r_{\bar w}}] +r_{w\bar w}\frac{|r_z|^2}{|r_w|^2}\right] - pr_{w\bar w}\frac{|r_z|^2}{|r_w|^2} +2p\re[r_{\bar z}E]\notag\\
 &= \frac{p}{|r_w|^2}\sL_r - pr_{w\bar w}\frac{|r_z|^2}{|r_w|^2} +2p\re[r_{\bar z}E]
\end{align}
and
\begin{align}\label{E:rhozw}
|(rp)_{z\bar w}|^2&= |pr_{z\bar w}+r_z T_{\bar w}+r_{\bar w}T_z|^2 =\left|pr_{z\bar w}+r_zT_{\bar w}+r_{\bar w}(-\frac{p r_{z \bar w}}{r_{\bar w}}+pE)\right|^2\notag\\
&=|r_zT_{\bar w} + pr_{\bar w}E|^2 = |r_z|^2|T_w|^2 + 2\re[pr_{\bar w}T_wr_{\bar z}E] +p^2|r_{\bar w}|^2|E|^2
\end{align} 
In particular, both $|(rp)_{z\bar z}|,|(rp)_{z\bar w}|^2\leq K(\sL_r+|r_z|^2)$ for some $K$.
Therefore $-\sh_{pr}=-\sh_{(1+T)r}\leq \tilde K(\sL_r+|r_z|^2)$.

\begin{theorem}\label{T:Levidom} 
Suppose that $|r_z|^2\leq C\sL_r$. Then there exists a local plurisubharmonic defining function on the boundary if and only if we can solve Equation \eqref{E:logder4}. Furthermore $\rho=(1+Kr+T)r$ is a local plurisubharmonic defining function on the boundary for some $K>0$.
\end{theorem}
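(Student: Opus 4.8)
The plan is to treat the two implications separately and then verify the ``furthermore'' clause. The forward direction is already contained in the necessary conditions established above, while the backward direction is essentially assembled from the two displayed computations \eqref{E:rhozz} and \eqref{E:rhozw} preceding the statement, with the hypothesis $|r_z|^2\le C\sL_r$ playing the decisive role.

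For the forward implication, suppose a local plurisubharmonic defining function $\rho=r\cdot h$ on the boundary exists and write $h=1+Kr+T$. Then Theorem \ref{thm:nec}(1) (equivalently the second key proposition) says precisely that $T$ solves Equation \eqref{E:logder4} with $|E|^2\le C(\sL_r+|r_z|^2)$. This direction is immediate and uses nothing about the size of $|r_z|^2$.

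For the converse, suppose $T$ solves \eqref{E:logder4}, set $p=1+T$, and put $\rho=(p+Kr)r=pr+Kr^2$ for a constant $K>0$ to be chosen at the end. The computations \eqref{E:rhozz} and \eqref{E:rhozw} already give $|(pr)_{z\bar z}|,|(pr)_{z\bar w}|^2=\mathcal O(\sL_r+|r_z|^2)$ and hence $-\sh_{(1+T)r}\le \tilde K(\sL_r+|r_z|^2)$. Here I would invoke the standing hypothesis $|r_z|^2\le C\sL_r$, which both forces $\sL_r\ge0$ near the origin and collapses the right-hand side into a single multiple of the Levi form, giving $-\sh_{(1+T)r}\le C'\sL_r$. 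Proposition \ref{Prop:Hessian} then yields, on $U\cap b\Omega$, $\sh_\rho=2Kh\sL_r+\sh_{(1+T)r}\ge(2Kh-C')\sL_r$, and since $h=1+T$ is positive and bounded away from $0$ near the origin while $\sL_r\ge0$, this determinant is nonnegative once $K$ is large.

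The step that requires care beyond the determinant estimate --- and which I expect to be the main obstacle --- is that positivity of $\sh_\rho$ does not by itself give plurisubharmonicity: the full $2\times2$ complex Hessian of $\rho$ must be positive semidefinite, which is equivalent to its trace and determinant both being nonnegative. I would control the trace by noting that $\rho_{w\bar w}=(pr)_{w\bar w}+2K|r_w|^2$ on $b\Omega$ is dominated by $2K|r_w|^2$ with $|r_w|$ bounded below (recall $r_w\ne0$ near the origin), while $\rho_{z\bar z}=(pr)_{z\bar z}+2K|r_z|^2$ is bounded below uniformly in $K$; hence $\rho_{z\bar z}+\rho_{w\bar w}\to+\infty$ as $K\to\infty$. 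Choosing a single $K>0$ large enough to make both the trace and the determinant nonnegative throughout $U\cap b\Omega$, the complex Hessian of $\rho$ is positive semidefinite there, so $\rho=(1+Kr+T)r$ is the desired local plurisubharmonic defining function on the boundary. This simultaneously establishes the equivalence and the ``furthermore'' clause.
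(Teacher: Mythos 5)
Your proposal is correct and follows essentially the same route as the paper: the forward direction is quoted from the necessary conditions, and the converse combines the estimates \eqref{E:rhozz}--\eqref{E:rhozw} with the hypothesis $|r_z|^2\leq C\sL_r$ to get $-\sh_{(1+T)r}\leq \tilde C\sL_r$, then uses the $2Kh\sL_r$ term from Proposition \ref{Prop:Hessian} to absorb the negativity. The only difference is that you unpack Theorem \ref{T:Hessiantresh} explicitly (checking the trace as well as the determinant for large $K$), whereas the paper simply cites that theorem; this is a welcome addition of detail, not a different argument.
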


\begin{proof}
We already showed that Equation \eqref{E:logder4} is a necessary condition. Under the additional assumption that there is a $T$ satisfying Equation \eqref{E:logder4}, $-\sh_{(1+T)r}\leq C(\sL_r+|r_z|^2) \leq \tilde C \sL_r$ for some $C,\tilde C>0$. Then by Theorem \ref{T:Hessiantresh} there exists $K>0$ such that $(1+Kr+T)r$ is a local plurisubharmonic defining function on the boundary.
\end{proof}

\subsection{$|r_z|^2\not\leq C\sL_r$ case}

If $|r_z|^2\not\leq C\sL_r$ for any $C>0$ in any neighborhood of the origin, then the terms involving $r_z$ and $E$ will play a significant role in determining if there is a local plurisubharmonic defining function on the boundary. 
The terms we do not have a desired control of are $-\frac{p r_{w\bar w}}{|r_w|^2}|r_z|^2$ and $2p \re[r_{\bar z}E]$ in the Equation \eqref{E:rhozz}. 
In the Equation \eqref{E:rhozw}, the problematic terms are $|r_z|^2|T_w|^2$, $2\re[pr_{\bar w}T_wr_{\bar z}E]$, and $p^2|r_{\bar w}|^2|E|^2$.

The terms involving $E$ can be improved by solving the Equation \eqref{E:logder4} to a higher order, for example, the $E$ terms satisfy $|E|^2\leq C\sL_r$ or even better. Secondly, if $\rho_{w\bar w}$ vanishes at the origin, the negativity of the determinant of the complex Hessian is easier to control.

On the other hand the terms involving just $|r_z|^2$ are harder to control. The lowest order terms of $T$, and therefore $T_w$, are forced by the Equation \eqref{E:logder4}. Thus there is no way of improving the bounds while still satisfying a necessary Equation \eqref{E:logder4}. However, $T_w$ and $r_{w\bar w}$ can be helpful. If $T_w$ and $r_{w\bar w}$ vanish to high order the determinant of the complex Hessian of $(1+T)r$ can still satisfy $-\sh_{(1+T)r}\leq C\sL_r$.

The Example \ref{Ex:type4} shows that $|r_z|^2\leq C\sL_r$ is not a necessary condition to having a local plurisubharmonic defining function on the boundary. Thus the following question remains open:
\begin{question}
Find a domain where $|r_z|^2$ is the obstruction to having a local plurisubharmonic defining function on the boundary.
\end{question}

\section{The Method}

\subsection{Basic strategy}
In this section we give an illustration of how to produce a local plurisubharmonic function on the boundary. 
\\Given a defining function $r$, our goal is to construct a real-valued $T$ that satisfies 
 \begin{equation}\label{E:logder5}
  \frac{\partial}{\partial z} \log (1+T) = -\frac{\partial}{\partial z} \log r_{\bar w} +E\ , 
  \end{equation} where  $|E|^2\leq C(\sL_r+|r_z|^2)$ for some $C>0$.
 
Write $r_w = \frac1{2i}+\tilde r_w$, where $\tilde r_w$ vanishes at the origin.
The Equation \eqref{E:logder5} is equivalent to 
\begin{align}\label{E:de1}
T_z&= -(1+T)\frac{r_{z\bar w}}{r_{\bar w}} + (1+T)E \notag\\
&=-(1+T)\frac{r_{z\bar w}}{-\frac1{2i}+\tilde r_{\bar w}}+(1+T)E \notag\\
&=  -(1+T)\frac{r_{z\bar w}}{-\frac1{2i}(1 -2i\tilde r_{\bar w})} + (1+T)E \notag\\
&= 2i(1+T)r_{z\bar w}(1+2i \tilde r_{\bar w}+(2i\tilde r_{\bar w})^2+...) +(1+T)E \notag \\
&= 2i r_{z\bar w}(1+ \mathcal O(T, \tilde r_{\bar w})) + (1+T)E
\end{align}
We wish to solve the Equation \eqref{E:logder5} up to the error terms $E$, where $|E|^2\leq C(\sL_r+|r_z|^2)$.
Therefore, any terms in \eqref{E:de1} whose modulus squared is bounded above by $C(\sL_r+|r_z|^2)$ for some $C>0$ can be omitted when solving this differential equation. In order to take full advantage, split $r_{z\bar w}=S_r+E$, where $|E|^2\leq C(\sL_r+|r_z|)^2$ and $S$ are the remaining terms which do not satisfy this bound. Thus disregarding the error terms $E$, the differential equation \eqref{E:de1} reduces to:
\begin{equation}\label{E:de2}
T_z = 2i S_r  (1+\mathcal O(T,\tilde r_{\bar w}))\ .
\end{equation}
We may view the remaining terms as a telescoping series. Since $T$ and $\tilde r_{\bar w}$ vanish at the origin they will produce terms of higher order, which we may ignore at the first pass.
Therefore we may further reduce the differential equation \eqref{E:de2} to:
\begin{equation}\label{E:de3}
T_z=2i S_r \ .
\end{equation}
Suppose a real-valued function $T$ solving \eqref{E:de3} exists. Let $\rho_1=(1+T)r$. Then
\begin{align*}
(\rho_1)_{z\bar w}=& r_{z\bar w}+r_z h_{\bar w} +r_{\bar w}h_z\\
=& (S_r+E)(1+T)+ r_z T_w +r_{\bar w} (2i S_r)\\
=& S_r+ S_rT +E(1+T) +r_zT_w +(-\frac1{2i}+\tilde r_{\bar w})2i S_r\\
=& S_rT+ E(1+T) +r_zT_w +2i\tilde r_{\bar w}S_r
\end{align*}
Notice the cancellation of the $S_r$.
Therefore, \begin{align*}
|(\rho_1)_{z\bar w}|&\leq |S_r||T|+ |E|(1+|T|) +|r_z||T_w|+2|r_{\bar w}||S_r|\\ 
&= |S_r|\cdot\mathcal O(|T|,|\tilde r_{\bar w}|)+ |E|(1+|T|)+|r_z||T_w|\ . \end{align*}
Using Cauchy-Schwarz we obtain:
\begin{align*}
|(\rho_1)_{z\bar w}|^2&\leq 3|S_r|\cdot\mathcal O(|T|,|\tilde r_{\bar w}|)+ 3|E|^2(1+|T|)^2+3|T_w|^2|r_z|^2 \\
&=  3|S_r|\cdot\mathcal O(|T|,|\tilde r_{\bar w}|) +\mathcal O(\sL_r+|r_z|^2)
\end{align*}

The new defining function $\rho_1$ need not be plurisubharmonic, but a necessary condition $|\rho_{z\bar w}|^2\leq C(\sL_r+|r_z|^2)$ is ``closer'' to being satisfied. This is so because the relevant terms $S_{\rho_1}$ of $(\rho_1)_{z\bar w}$ satisfy
\begin{equation}
 |S_{\rho_1}|\leq 3|S_r|\cdot\mathcal O(|T|,|\tilde r_{\bar w}|)=|S_r|\cdot \littleO(1) < |S_r|\ . 
 \end{equation}

We can repeat the above process to construct $\rho_2$, $\rho_3$,... 
If at any point $\rho_n$ is plurisubharmonic, we achieved our goal. On the other hand, if one differential equation $T_z=2i S_{\rho_n}$ that comes up in the construction cannot be solved, the domain does not have a local plurisubharmonic defining function on the boundary. 
This process may continue indefinitely, but we do not consider the question of convergence or plurisubharmonicity of the limit function in this paper.

\subsection{Strongly pseudoconvex domains}

Let $\Omega\subset\C^2$ be a strongly pseudoconvex domain. By the definition of strong pseudoconvexity, the Levi form $\sL_r>0$ for every point on the boundary. In particular, the Levi form at the origin, $\sL_r(0,0)=L>0$, is positive. By the continuity of the Levi form there is a neighborhood of the origin $U$ on which $\sL_r>\frac L2$. To produce a local plurisubharmonic defining function we need to solve the Equation \eqref{E:logder5}. However, we notice that on $U$, $|r_{z\bar w}|\leq C=\mathcal O(1)=\mathcal O(\sL)$. Therefore the Equation \eqref{E:logder5} reduces to 
\[\frac{\partial}{\partial z} \log (1+T) = 0 +E \ ,\]
which is solved by $T=0$. 

Also, on $U$,  $|r_z|^2\leq \tilde C=\mathcal O(1)=\mathcal O(\sL_r)$. Therefore by Theorem $\ref{T:Levidom}$, $(1+Kr)r$ is a local plurisubharmonic defining function on the boundary for some $K>0$.

\subsection{Example of D'Angelo type $4$}

\begin{example}\label{Ex:type4}
Let $r_A(z,w)= \im w +|z|^4+100|z|^6 +4\re z \re w - A(\re w)^2$. 
\end{example}

\begin{lemma}
$\Omega_A=\{r_A<0\}$ is pseudoconvex in a (small) neighborhood of the origin if and only if $A\geq8$.
\end{lemma}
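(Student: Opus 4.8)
The plan is to reduce the whole statement to a sign analysis of the Levi form $\sL_{r_A}$ on $b\Omega_A$ near the origin. First I would compute the complex Hessian of $r_A$ directly. A short calculation gives $r_{z\bar z}=4|z|^2+900|z|^4$, $r_{w\bar w}=-\tfrac{A}{2}$, $r_{z\bar w}=1$, together with $r_z=2z|z|^2+300z^2\bar z^3+2\re w$ and $r_w=\tfrac{1}{2i}+2\re z-A\re w$, so that $|r_w|^2=(2\re z-A\re w)^2+\tfrac14$. Substituting these into $\sL_{r_A}=r_{z\bar z}|r_w|^2+r_{w\bar w}|r_z|^2-2\re[r_{z\bar w}r_wr_{\bar z}]$ yields an explicit real-analytic function of the boundary coordinates. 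On $b\Omega_A$ the variable $\im w$ is determined by $r_A=0$ and vanishes to order $2$, so I would treat $\sL_{r_A}$ as a function of the three real coordinates $x=\re z$, $y=\im z$, $u=\re w$ near the origin; every small $(x,y,u)$ corresponds to a genuine boundary point.

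The second step is to isolate the second-order part of $\sL_{r_A}$ in $(x,y,u)$. Collecting the lowest-order contributions --- namely $|z|^2$ from the first term, $-2Au^2$ from the second, and $-4(2x-Au)u$ from the cross term --- produces the quadratic form $Q_A=x^2+y^2+2Au^2-8xu$, and one checks that every remaining term of $\sL_{r_A}$ has order at least three. Writing $Q_A=y^2+\begin{pmatrix}x & u\end{pmatrix}\begin{pmatrix}1 & -4\\ -4 & 2A\end{pmatrix}\begin{pmatrix}x\\ u\end{pmatrix}$, the $2\times 2$ block has determinant $2A-16$, so $Q_A$ is positive semidefinite precisely when $A\geq 8$. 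For $A>8$ the form is positive definite; since the remainder is $O\big((x^2+y^2+u^2)^{3/2}\big)$, $\sL_{r_A}\geq 0$ on a neighborhood and $\Omega_A$ is pseudoconvex. For $A<8$ the block is indefinite, so choosing $(x,u)$ in its negative cone with $y=0$ and scaling toward the origin gives boundary points with $\sL_{r_A}=t^2\,Q_A+O(t^3)<0$ accumulating at the origin, whence $\Omega_A$ is not pseudoconvex in any neighborhood.

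The borderline case $A=8$ is where the real work lies, and I expect it to be the main obstacle. Here $Q_8=(x-4u)^2+y^2$ is only positive semidefinite, degenerating along the line $x=4u$, $y=0$, so the sign of $\sL_{r_8}$ in that direction is decided by the higher-order terms. The useful simplification is that $2x-Au=2(x-4u)$ vanishes on the degenerate line, which annihilates the large term $(4|z|^2+900|z|^4)(2x-Au)^2$; the surviving relevant terms are a cubic $2y|z|^2$ coming from $\im r_z$ in the cross term, quartics of the shape $c\cdot ux|z|^2$, and the positive quartic $225|z|^4$ produced by the $100|z|^6$ in $r_A$. I would establish $\sL_{r_8}\geq 0$ by dominating each dangerous term: the cubic via $2|y|\,|z|^2\leq y^2+|z|^4$; the quartics $c\cdot ux|z|^2$ by substituting $u=\tfrac14 x-\tfrac14(x-4u)$ and applying AM--GM, which leaves only controllable multiples of $(x-4u)^2$ and of $|z|^4$; and all other terms are of order six or higher. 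Because the positive quartic carries the large coefficient $225$, these negative multiples of $|z|^4$ and of $(x-4u)^2$ are absorbed for $(z,w)$ near the origin, giving $\sL_{r_8}\geq 0$ and hence pseudoconvexity. This also clarifies the role of the constant $100$: it is chosen large enough that the $225|z|^4$ term wins. Combining the three cases yields the stated equivalence.
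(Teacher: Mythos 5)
Your argument is correct and takes essentially the same route as the paper: both isolate the quadratic part $(\re z)^2+(\im z)^2+2A(\re w)^2-8\re z\,\re w$ of the Levi form, whose $2\times2$ block has determinant $2A-16$ (the paper's completed square with the parameter $\epsilon$ encodes exactly this criterion), and absorb the cubic and quartic remainders into $\tfrac12(\im z)^2$ and the $225|z|^4$ contributed by the $100|z|^6$ term. The only substantive addition is that you actually prove the ``only if'' half by scaling along the negative cone of the indefinite form when $A<8$, a direction the paper's proof asserts only parenthetically.
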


\begin{proof}
A direct computation shows that in a sufficiently small neighborhood of the origin
\begin{align*}
\sL_{r_A} \geq & \frac12(\im z)^2+ 100|z|^4 +(\re z)^2 +2A(\re w)^2 -8\re z\re w\\
=&  \frac12(\im z)^2+ 100|z|^4 + \epsilon(\re z)^2 + (2A-\frac{16}{1-\epsilon})(\re w)^2 + (\sqrt{1-\epsilon}\re z- \frac4{\sqrt{1-\epsilon}}\re w)^2\\
\geq& \frac12(\im z)^2 + \epsilon(\re z)^2 + (2A-\frac{16}{1-\epsilon})(\re w)^2 + 100|z|^4
\end{align*}
for any $1>\epsilon\geq0$. This shows that $\Omega_A$ is pseudoconvex if $A\geq 8$. (Actually, $\Omega_A$ is pseudoconvex if and only if $A\geq8$.)
\end{proof}

\begin{lemma}
Let $A\geq8$. The defining function $r_A$ is not plurisubharmonic in any neighborhood of the origin.
\end{lemma}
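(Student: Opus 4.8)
The plan is to compute the complex Hessian of $r_A$ directly and to exhibit a negative diagonal entry, which rules out positive semidefiniteness at every point. First I would convert each summand into the variables $z,\bar z,w,\bar w$ via $\re w=(w+\bar w)/2$, $\im w=(w-\bar w)/(2i)$, $\re z=(z+\bar z)/2$, and $|z|^{2k}=z^k\bar z^k$, so that the only summand contributing to the $w\bar w$ entry of the Hessian is $-A(\re w)^2=-\tfrac{A}{4}(w+\bar w)^2$. The terms $\im w$ and $|z|^{2k}$ are independent of $w$ (resp.\ of the $w,\bar w$ pairing), and the cross term $4\re z\,\re w=(z+\bar z)(w+\bar w)$ is annihilated after one $\bar w$-derivative, so it too contributes nothing to $(r_A)_{w\bar w}$.

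The computation then yields $(r_A)_{w\bar w}=\frac{\partial^2}{\partial w\,\partial\bar w}\bigl[-\tfrac{A}{4}(w+\bar w)^2\bigr]=-\tfrac{A}{2}$, a strictly negative constant because $A\ge 8$. By the criterion recorded in Section 2, $r_A$ is plurisubharmonic on an open set precisely when its complex Hessian $H_{r_A}$ is positive semidefinite there, and a positive semidefinite matrix has nonnegative diagonal entries. Since $(r_A)_{w\bar w}\equiv-\tfrac{A}{2}<0$ on all of $\C^2$, the matrix $H_{r_A}$ is nowhere positive semidefinite; in particular no neighborhood of the origin carries a plurisubharmonic $r_A$, which is the assertion.

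For completeness, and as an independent confirmation, I would also record the remaining entries $(r_A)_{z\bar z}=4|z|^2+900|z|^4$ and $(r_A)_{z\bar w}=1$, giving $\sh_{r_A}=-\tfrac{A}{2}\bigl(4|z|^2+900|z|^4\bigr)-1<0$ everywhere. This reproduces the failure through the determinant criterion and is consistent with the earlier lemma: at the origin $\sL_{r_A}(0)=0$, so the point is weakly pseudoconvex, yet the Hessian is already indefinite there.

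The only real work is bookkeeping: correctly passing from the real expressions $\re z\,\re w$ and $(\re w)^2$ to Wirtinger derivatives, and verifying that the cross term $4\re z\,\re w$ makes no contribution to $(r_A)_{w\bar w}$, so that the sign of that entry is governed solely by the $-A(\re w)^2$ term. I do not anticipate any genuine obstacle; the example is built so that the obstruction to plurisubharmonicity sits in this single negative constant entry.
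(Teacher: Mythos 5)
Your proof is correct and follows the same route as the paper: both observe that $(r_A)_{w\bar w}=-\tfrac{A}{2}<0$, so the diagonal of the complex Hessian already violates positive semidefiniteness. The extra verification via the determinant $\sh_{r_A}$ is consistent but not needed.
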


\begin{proof}
A necessary condition for $r_A$ being plurisubharmonic is $(r_A)_{w\bar w}\geq0$ in some neighborhood of the origin. However, $(r_A)_{w\bar w}=-\frac{A}{2}<0$. 
\end{proof}

\begin{lemma}
Let $A>8$. Then $\rho_1=r(1-4\im z+Kr)$ is a local plurisubharmonic defining function on the boundary for some $K>0$.
\end{lemma}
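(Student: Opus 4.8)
The plan is to verify the two sufficient conditions of Theorem~\ref{T:Levidom} for the specific multiplier $h=1-4\im z+Kr$, i.e.\ with $T=-4\im z$. First I would establish that the Levi form dominates $|r_z|^2$ in a neighborhood of the origin, since this is the hypothesis that lets us invoke Theorem~\ref{T:Levidom}. We have $r_z = 2\bar z|z|^2+300\bar z|z|^4+2\re w + \mathcal O(\text{higher})$, so $|r_z|^2$ is controlled by terms of order $|z|^6$, $|z|^4(\re w)$, and $(\re w)^2$. Comparing with the lower bound for $\sL_{r_A}$ from the pseudoconvexity lemma — which for $A>8$ contains strictly positive multiples of $(\im z)^2$, $(\re z)^2$, $(\re w)^2$, and $|z|^4$ — I would check that each summand in $|r_z|^2$ is absorbed: the $(\re w)^2$ term by the $(2A-16/(1-\epsilon))(\re w)^2$ term (which is strictly positive precisely when $A>8$), and the $|z|^6$ and cross terms by the $100|z|^4$ and $(\re z)^2$ terms after shrinking the neighborhood. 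This gives $|r_z|^2\le C\sL_{r_A}$.

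Next I would confirm that $T=-4\im z$ actually solves Equation~\eqref{E:logder4} up to the allowed error. Since $\im z = \frac1{2i}(z-\bar z)$, we have $T_z = -4\cdot\frac1{2i} = 2i$, so $\partial_z\log(1+T)=\frac{2i}{1-4\im z}$. On the other side, using $r_{\bar w}=-\frac1{2i}+\tilde r_{\bar w}$ and computing $r_{z\bar w}$ for $r_A$ (the relevant lowest-order piece comes from the $4\re z\re w$ term, giving $r_{z\bar w}$ a constant leading part), I would show that $-\partial_z\log r_{\bar w} = -r_{z\bar w}/r_{\bar w}$ agrees with $2i$ to leading order, so that the discrepancy $E$ satisfies $|E|^2\le C(\sL_{r_A}+|r_z|^2)$. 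This is the verification that the explicit linear-in-$\im z$ multiplier is exactly the one forced by the differential equation at lowest order.

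With both conditions in hand, the two sufficient hypotheses of Theorem~\ref{T:Levidom} are met: Equation~\eqref{E:logder4} is solvable (by the explicit $T$), and $|r_z|^2\le C\sL_{r_A}$. The theorem then yields a constant $K>0$ for which $\rho_1=(1+Kr+T)r=(1-4\im z+Kr)r$ is a local plurisubharmonic defining function on the boundary, which is exactly the claim. Alternatively, and perhaps more transparently for a worked example, I would substitute $T=-4\im z$ directly into the sufficiency computation \eqref{E:rhozz}–\eqref{E:rhozw} to get $-\sh_{(1+T)r}\le \tilde K(\sL_r+|r_z|^2)\le C\sL_r$ explicitly, and then apply Proposition~\ref{Prop:Hessian} (via Theorem~\ref{T:Hessiantresh}) to choose $K$ large enough that $2Kh\sL_r$ overwhelms the negative part, making $\sh_{\rho_1}\ge0$; simultaneously $K$ must be large enough that $(\rho_1)_{w\bar w}>0$, which is automatic once $K|r_w|^2$ dominates the bounded $r_{w\bar w}=-A/2$ contribution.

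The main obstacle I anticipate is the bookkeeping in the $|r_z|^2\le C\sL_{r_A}$ comparison, specifically tracking that the $(\re w)^2$ contribution to $|r_z|^2$ is genuinely dominated. This is where the strict inequality $A>8$ (rather than $A\ge8$) is essential: at $A=8$ one can only arrange $2A-16/(1-\epsilon)\ge0$ with $\epsilon=0$, leaving no positive $(\re w)^2$ slack in the Levi form to absorb the $4(\re w)^2$ piece of $|r_z|^2$, so the domination would fail. Away from that borderline the estimate is a routine matter of choosing $\epsilon$ small and shrinking the neighborhood so the higher-order terms are negligible.
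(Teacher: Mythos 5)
Your proposal is correct and follows essentially the same route as the paper: establish $|r_z|^2\leq C\sL_{r_A}$ using the strict positivity of the $(\re w)^2$ coefficient available only when $A>8$, verify that $T=-4\im z$ solves Equation~\eqref{E:logder4} up to admissible error (since the constant part of $r_{z\bar w}$ coming from $4\re z\re w$ forces $T_z=2i$), and conclude via Theorem~\ref{T:Levidom}. Your remark on why the argument breaks down at the borderline $A=8$ matches the paper's subsequent treatment of that case.
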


\begin{proof}
Since $A>8$, we can pick $\epsilon>0$ small enough such that 
\begin{equation}\label{E:Levi-rA}
\sL_{r_A}\geq \frac12(\im z)^2+ \epsilon (\re z)^2+ \epsilon (\re w)^2\geq \epsilon |z|^2 +\epsilon (\re w)^2 \ .
\end{equation}
Notice that
\begin{align*}
|(r_A)_z|^2=& \left| 2z|z|^2+300z|z|^4+2\re w\right|^2 \leq 2\left| 2z|z|^2+300z|z|^4\right|^2 +8(\re w)^2 \\
=&8(\re w)^2+\mathcal O(|z|^6)\leq \frac8\epsilon \sL_{r_A} \ .
\end{align*}
Thus by Theorem \ref{T:Levidom} $\rho=r(1+ T +Kr)$ is a local plurisubharmonic defining function for some $K>0$ if and only if $T$ solves the differential equation \eqref{E:logder5}. Furthermore, \eqref{E:Levi-rA} shows that any non-constant term in the equation \eqref{E:logder5} is an error term $E$.

The first differential equation we need to solve is \[T_z= 2i S_{r_A}=2i(1) \ . \]
It is easy to check that $T=-4\im z$ is a solution to $T_z=2i S_{r_A}$ and the differential equation \eqref{E:logder5} is satisfied up to error terms $E$.
Therefore $\rho=\rho_1=r(1-4\im z+Kr)$ is a local plurisubharmonic defining function on the boundary for some $K>0$.
\end{proof}

\begin{lemma}
Let $A=8$. Then $\rho_2=r(1-4\im z - 8(\re z)^2 + 8(\im z)^2 + Kr)$ is a local plurisubharmonic defining function on the boundary for some $K>0$.
\end{lemma}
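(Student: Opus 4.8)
The plan is to apply the criterion of Theorem~\ref{T:Hessiantresh} directly to the explicit multiplier $p=1+T$ with $T=-4\im z-8(\re z)^2+8(\im z)^2$, and to show that $-\sh_{(1+T)r_A}\le C\sL_{r_A}$ for some $C>0$; the constant $K$ is then produced by Proposition~\ref{Prop:Hessian} exactly as in the proof of Theorem~\ref{T:Hessiantresh}. The reason I cannot simply quote Theorem~\ref{T:Levidom} here is that at $A=8$ the estimate \eqref{E:Levi-rA} degenerates: one only obtains $\sL_{r_A}\gtrsim(\im z)^2+(\re z-4\re w)^2+|z|^4$, which controls neither $(\re w)^2$ nor $|r_z|^2$ on its own. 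The form of $T$ is dictated by the iteration of Section~5: the first pass solves $T_z=2iS_{r_A}=2i$, giving the linear term $-4\im z$ (the multiplier that already works when $A>8$), and the harmonic quadratic term $-8\re(z^2)$ is the second correction forced by the surviving $\re w$-direction.

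First I would record the boundary data for $r_A$. A direct computation gives $r_{z\bar w}=1$, $r_{w\bar w}=-4$, and $r_{z\bar z}=4|z|^2+900|z|^4$, together with $T_w=0$ and $T_{z\bar z}=0$ (the quadratic part of $T$ is pluriharmonic). Hence, on $b\Omega_A$, $(pr)_{w\bar w}=pr_{w\bar w}=-4p$, while $(pr)_{z\bar z}=pr_{z\bar z}+2\re[T_z r_{\bar z}]$ and $(pr)_{z\bar w}=p+T_z r_{\bar w}$. Since $(pr)_{w\bar w}=-4p$, the determinant becomes
\[-\sh_{(1+T)r_A}=|(pr)_{z\bar w}|^2+4p\,(pr)_{z\bar z},\]
and the whole problem is to bound this by $C\sL_{r_A}$.

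The heart of the argument is a cancellation at second order. Using $T_z=2i-8z$ and $r_{\bar w}=-\tfrac1{2i}+2\re z-8\re w$, the first-order part of $(pr)_{z\bar w}$ collapses to exactly $-16i\re w$; indeed the coefficients $-8$ and $+8$ in the quadratic part of $T$ are the unique ones that annihilate the $\re z$- and $\im z$-components of this first-order part, leaving only the $\re w$-direction. Consequently $|(pr)_{z\bar w}|^2$ contributes $256(\re w)^2$ at second order, while $4p\,(pr)_{z\bar z}$ contributes $16|z|^2-128\,\re z\,\re w$, the cross term $-128\,\re z\,\re w$ arising precisely from $2\re[(-8z)(2\re w)]$ inside $2\re[T_z r_{\bar z}]$. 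These combine into
\[256(\re w)^2+16(\re z)^2-128\,\re z\,\re w+16(\im z)^2=16(\re z-4\re w)^2+16(\im z)^2,\]
which is exactly $16$ times the quadratic part of $\sL_{r_A}$. Thus the ``bad'' term $16(\re z)^2$ produced by the Hessian diagonal $r_{z\bar z}$ — the term that prevents the first-pass multiplier $1-4\im z$ from working — is rerouted by the quadratic correction into the Levi-compatible square $(\re z-4\re w)^2$.

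The main obstacle will be the higher-order terms. After the second-order cancellation, the remainder $-\sh_{(1+T)r_A}-16(\im z)^2-16(\re z-4\re w)^2$ consists of terms of degree $\ge 3$, and these must be absorbed into $C\sL_{r_A}$ via Young's inequality. The delicate point is that the quadratic parts of $-\sh_{(1+T)r_A}$ and of $\sL_{r_A}$ vanish on the same null set $\{\re z=4\re w,\ \im z=0\}$, so no $\re w$-dependence is controlled by the quadratic part of the Levi form alone; in particular terms such as $(\re w)^2\im z$ and $(\re w)^2|z|^2$ are not obviously dominated. The resolution is the quartic positivity of the Levi form: since $\re w=\tfrac14\bigl(\re z-(\re z-4\re w)\bigr)$ gives $(\re w)^2\lesssim(\re z-4\re w)^2+|z|^2$, combining $(\re z-4\re w)^2$ with $|z|^4$ (the latter supplied by the $100|z|^6$ term in $r_A$) yields $(\re w)^4\lesssim\sL_{r_A}$ in a neighborhood of the origin. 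With this estimate the degree-$3$ and degree-$4$ remainders — which, along the null direction, reduce to powers of $\re w$ — are all $\mathcal{O}(\sL_{r_A})$, completing the verification of $-\sh_{(1+T)r_A}\le C\sL_{r_A}$.
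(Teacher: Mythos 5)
Your overall strategy is sound and, in fact, supplies the ``direct computation'' that the paper only asserts: the paper derives $T$ by solving the second-pass equation $(T_2)_z=2i(\rho_1)_{z\bar w}$ from Section~5 and then claims plurisubharmonicity of $\rho_2$ without showing the verification, whereas you verify $-\sh_{(1+T)r_8}\le C\sL_{r_8}$ directly and invoke Theorem~\ref{T:Hessiantresh}. Your second-order computation is correct: with $T_z=2i-8z$ one indeed gets $(pr)_{z\bar w}=-16i\re w+\mathcal O(2)$, and the quadratic part of $|(pr)_{z\bar w}|^2+4p(pr)_{z\bar z}$ collapses to $16(\im z)^2+16(\re z-4\re w)^2$, which is $16$ times the quadratic part of $\sL_{r_8}$.

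There is, however, a genuine gap in your handling of the cubic remainder, and it sits exactly at the delicate point you yourself flag. You argue that the degree-$3$ and degree-$4$ remainders ``reduce to powers of $\re w$ along the null direction'' and are then controlled because $(\re w)^4\lesssim\sL_{r_8}$. That estimate only helps for powers of degree at least $4$: along $\{\im z=0,\ \re z=4\re w\}$ the Levi form is comparable to $(\re w)^4$, so a surviving cubic term such as $(\re w)^3$, $(\re w)^2\re z$, or $|z|^2\re w$ satisfies $(\re w)^3/\sL_{r_8}\to\infty$ and would destroy the bound $-\sh_{(1+T)r_8}\le C\sL_{r_8}$; no application of Young's inequality can reroute such a term into $(\im z)^2$, $(\re z-4\re w)^2$, or a quartic. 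The argument closes only because of an additional cancellation you have not stated or checked: the entire cubic part of the remainder carries a factor of $\im z$. Concretely, the cross term $2\re\bigl[(-16i\re w)\overline{Q_2}\bigr]$ picks out only $\im Q_2=-16\re z\im z+64\im z\re w$, giving $512\re w\re z\im z-2048(\re w)^2\im z$, and the cubic contributions of $4p(pr)_{z\bar z}$ are $-160\im z|z|^2+512\im z\re z\re w$; every term is divisible by $\im z$ and hence splits as $\epsilon(\im z)^2$ plus a quartic, which your $(\re w)^4\lesssim\sL_{r_8}$ estimate then does absorb. You need to exhibit this vanishing of the cubic remainder on the null set explicitly; as written, the step ``the degree-$3$ remainders are $\mathcal O(\sL_{r_8})$'' is asserted with a justification that does not cover them.
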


\begin{proof}
The Levi form is given by 
\[\sL_{r_8}\geq \frac12(\im z)^2 + (\re z-4\re w)^2 +|z|^4\ ,\]
and 
\[|(r_8)_z|^2=4(\re w)^2+ 4\re w\re\left[2z|z|^2+300z|z|^4\right]+ \left|2z|z|^2+300z|z|^4\right|^2\ .\]
Notice that on $\{\im z=0, \re z= 4\re w\}$ the Levi form vanishes to order $4$, while $|(r_8)_z|^2$ vanishes to order $2$. Therefore $|(r_8)_z|^2\not\leq C\sL_{r_8}$ for any $C>0$ in any neighborhood of the origin. Thus Theorem \ref{T:Levidom} cannot be applied.

Moreover, direct computation shows that $\rho_1=r(1-4\im z+Kr)$ is not plurisubharmonic for any $K>0$. In this case we need to solve the differential equation \eqref{E:logder5} to higher order. 
The second differential equation then becomes
\[(T_2)_z = 2i\left((\rho_1)_{z\bar w}\right) = 2i(-4\im z+4i\re z- 16i\re w) = -8i \im z - 8\re w +32\re w\ . \]
The solution is $T_2 = 8(\im z)^2 - 8(\re z)^2 +64\re z\re w$. However, notice that $64\re z\re w$ is a multiple of a term in the defining function $r$. 
Thus, the contributions of this term already come from the $Kr$ term in the multipler function $h$. 
Therefore we can omit $64\re z\re w$ from $T$ to produce a simpler multiplier function $h$. A direct computation shows that $\rho=\rho_2= r(1-4\im z +8(\im z)^2- 8(\re z)^2 +Kr)$ is a local plurisubharmonic defining function on the boundary for some $K>0$.

\end{proof}

\section{Domains in $\C^n$}

Let $\Omega={r(z_1,....,z_n)<0}\subset \C^n$.
The complex tangent space $T^{1,0}_p b\Omega$  to $b\Omega$ at $p$ is defined to be
all $\xi=(\xi_1,...,\xi_n)\in\C^n$ satisfying $$\sum_{j=1}^n\frac{\partial r}{\partial z_j}(q)\xi_j=0.$$
$\Omega$ is pseudoconvex if

$$\sL_r(\xi)=\sum_{j,k=1}^n\frac{\partial^2 r}{\partial z_j\partial\bar z_k}(q)\xi_j\bar\xi_k\geq 0\qquad\text{ for all } \xi\in T^{1,0}_pb\Omega,$$
Similarly, $r$ is plurisubharmonic if
$$\sL_r(\xi)=\sum_{j,k=1}^n\frac{\partial^2 r}{\partial z_j\partial\bar z_k}(q)\xi_j\bar\xi_k\geq 0\qquad\text{ for all } \xi\in \C^n$$

Choose a coordinate system such that the defining function $r$ is of the form \[r(z_1,...,z_{n-1}, w)=\im w +F(z_1,\bar z_1,...,z_{n-1},\bar z_{n-1}, \re w,\im w)\ , \] where terms of $F$ are of degree at least $2$.

Let $v_j=\langle 0,...,0, r_w, 0,...0, -r_{z_j}\rangle$, where $r_w$ is in the $j$-th coordinate for $j=1,...,n-1$. Notice that $v_1,...,v_{n-1}$ form a basis  for the tangent space $T^{1,0}_p b\Omega$ to $b\Omega$ at $p$. Let $$\sL_r(v_j)=r_{{z_j}\bar{z_j}}|r_w|^2+r_{w\bar w}|r_{z_j}|^2 -2\re[r_{{z_j}\bar w}r_wr_{\bar z_j}]$$ be the Levi form acting on $v_j$.
Let $\sh^j_f=det\begin{pmatrix}
f_{z_j\bar{z_j}} & f_{z_j \bar w}\\
f_{\bar{z_j} w} & f_{w\bar w}
\end{pmatrix}$ be the determinant of the corresponding $2\times2$ minor of the complex Hessian of $f$.

Similar proofs as in $\C^2$ case, by considering the appropriate $2\times 2$ minor of the complex Hessian matrix, give the following analogs:

\begin{proposition}\label{Prop:nHessian}
For $j=1,...,n-1$
\[\sh^j_{(1+Kr+T)r}=2Kh\sL_r(v_j)+\sh^j_{(1+T)r}\]
\end{proposition}

\begin{lemma}\label{nece1}
Suppose $\rho=r\cdot h$ is a local plurisubharmonic defining function on the boundary.
\begin{enumerate}
\item $\frac{2h\sL_r(v_j)}{|r_w|^2} +\frac{2\rho_{w\bar w}|r_{z_j}|^2}{|r_w|^2} \geq \rho_{z_j\bar z_j} $
\item $\rho_{z_j\bar z_j}\geq \frac{\rho_{w\bar w}|r_{z_j}|^2}{2|r_w|^2}-\frac{h\sL_r(v_j)}{|r_w|^2}$ 
\item $\rho_{z_j\bar z_j}\geq \frac{h\sL_r(v_j)}{2|r_w|^2} - \frac{\rho_{w\bar w}|r_{z_j}|^2}{|r_w|^2}$
\item $|\rho_{z_j\bar w}|^2\leq \frac{2\rho_{w\bar w}h\sL_r(v_j)}{|r_w|^2} +\frac{2\rho_{w\bar w}^2|r_{z_j}|^2}{|r_w|^2}$ 
\end{enumerate}

As a consequence $\rho_{z_j\bar {z_j}},|\rho_{z_j\bar w}|^2 \leq C(\sL_r(v_j)+|r_{z_j}|^2)$, or equivalently  $\rho_{z_j\bar {z_j}},|\rho_{z_j\bar w}|^2 = \mathcal O(\sL_r(v_j)+|r_{z_j}|^2)$ in $U\cap b\Omega$ for some neighborhood $U$ of the origin .
\end{lemma}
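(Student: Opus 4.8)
The plan is to reduce each assertion to the two--variable argument used in the $\C^2$ case, carried out in the pair of variables $(z_j,w)$. First I would record that each $v_j$ is a tangent vector, since $\sum_k r_{z_k}(p)(v_j)_k = r_{z_j}r_w - r_w r_{z_j}=0$. Because $\rho$ is plurisubharmonic, its full $n\times n$ complex Hessian $H_\rho$ is positive semidefinite at each boundary point, and a principal $2\times2$ submatrix of a positive semidefinite matrix is again positive semidefinite; equivalently, testing $H_\rho$ against vectors supported only in the $z_j$-- and $w$--slots detects exactly the minor
\[\begin{pmatrix} \rho_{z_j\bar z_j} & \rho_{z_j\bar w}\\ \rho_{w\bar z_j} & \rho_{w\bar w}\end{pmatrix}.\]
Thus for every such vector the associated quadratic form is nonnegative, and in particular the determinant condition $|\rho_{z_j\bar w}|^2\leq \rho_{z_j\bar z_j}\rho_{w\bar w}$ holds. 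This is the reduction that lets the one--variable calculation run unchanged, with no interference from the other coordinate directions.

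Next I would feed in the test vectors. Applying the $2\times2$ form to the vector with $2^m r_w$ in slot $j$ and $-r_{z_j}$ in the $w$--slot yields
\[0\leq 2^{2m}\rho_{z_j\bar z_j}|r_w|^2+\rho_{w\bar w}|r_{z_j}|^2-2\cdot 2^m\re[\rho_{z_j\bar w}r_w r_{\bar z_j}],\]
and substituting $\sL_\rho(v_j)=\rho_{z_j\bar z_j}|r_w|^2+\rho_{w\bar w}|r_{z_j}|^2-2\re[\rho_{z_j\bar w}r_wr_{\bar z_j}]$ rewrites this as
\[0\leq 2^m\Big[(2^m-1)\rho_{z_j\bar z_j}|r_w|^2+\tfrac{1-2^m}{2^m}\rho_{w\bar w}|r_{z_j}|^2+\sL_\rho(v_j)\Big].\]
Taking $m=-1$ gives (1) and $m=1$ gives (2). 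For (3) I would instead apply the form to the vector with $r_w$ in slot $j$ and $+r_{z_j}$ in the $w$--slot, which after the same substitution produces $2\rho_{z_j\bar z_j}|r_w|^2+2\rho_{w\bar w}|r_{z_j}|^2-\sL_\rho(v_j)\geq0$.

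The one input that is not pure book-keeping, and which I regard as the crux, is the boundary identity $\sL_\rho(v_j)=h\,\sL_r(v_j)$ on $b\Omega$, needed to turn $\sL_\rho(v_j)$ into $h\sL_r(v_j)$ in (1)--(3). I would verify it by writing $\rho=hr$, differentiating, and setting $r=0$: the leading term is $h\sL_r(v_j)$, the contributions containing first derivatives of $h$ coming from the diagonal entries assemble into $2|r_w|^2\re[h_{z_j}r_{\bar z_j}]+2|r_{z_j}|^2\re[h_{\bar w}r_w]$, and these are exactly cancelled by $-2|r_w|^2\re[h_{z_j}r_{\bar z_j}]-2|r_{z_j}|^2\re[h_{\bar w}r_w]$ coming from the off--diagonal term (using that $r$ real gives $r_{\bar z_j}=\overline{r_{z_j}}$ and $r_{\bar w}=\overline{r_w}$). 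Since $v_j$ involves only the $z_j$-- and $w$--derivatives, this is literally the $\C^2$ computation. With the identity in hand, (4) follows by combining the determinant bound $|\rho_{z_j\bar w}|^2\leq\rho_{z_j\bar z_j}\rho_{w\bar w}$ with (1). Finally, for the $\mathcal O$--consequence I would use that $r_w\neq0$ near the origin, so $|r_w|^{-2}$, $h$, and $\rho_{w\bar w}$ are bounded there; feeding these bounds into (1) and (4) gives $\rho_{z_j\bar z_j},|\rho_{z_j\bar w}|^2\leq C(\sL_r(v_j)+|r_{z_j}|^2)$ on $U\cap b\Omega$. The only genuine obstacle is the Levi identity; everything after the reduction to the $(z_j,w)$--minor is a transcription of the $\C^2$ proof with $z$ replaced by $z_j$.
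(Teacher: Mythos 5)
Your proposal is correct and follows essentially the same route as the paper: the paper proves the $\C^2$ version by testing the Hessian on $\langle 2^n r_w,-r_z\rangle$ and $\langle r_w,r_z\rangle$, using $\sL_\rho=h\sL_r$ on $b\Omega$ and $|\rho_{z\bar w}|^2\leq\rho_{z\bar z}\rho_{w\bar w}$, and then states the $\C^n$ lemma by exactly the reduction to the $(z_j,w)$ principal minor that you carry out. You merely make explicit two steps the paper leaves implicit (the positive semidefiniteness of the $2\times2$ minor and the verification of $\sL_\rho(v_j)=h\sL_r(v_j)$ on the boundary), both of which are done correctly.
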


\begin{proposition}\label{Prop:nlogder}
Suppose that $\rho=r\cdot h$ is a local plurisubharmonic defining function on the boundary. Then for $j=1,...,n-1$
\begin{equation}\label{E:nlogder}
\frac{\partial}{\partial z_j} \log h = -\frac{\partial}{\partial z_j} \log r_{\bar w} +E_j , \end{equation}
where $|E_j|^2\leq C(\sL_r(v_j)+|r_{z_j}|^2)$. 
\end{proposition}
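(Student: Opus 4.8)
The plan is to reduce, for each fixed index $j\in\{1,\ldots,n-1\}$, to the exact computation already carried out in the $\C^2$ case. The crucial observation is that the mixed second derivative $\rho_{z_j\bar w}$ involves only the variables $z_j$ and $w$: expanding $\rho=r\cdot h$ by the product rule gives
\[
\rho_{z_j\bar w}=r_{z_j\bar w}h+r_{z_j}h_{\bar w}+r_{\bar w}h_{z_j},
\]
which is formally identical to the $\C^2$ expression with $z$ replaced by $z_j$. No cross-terms in the other coordinates $z_k$ enter, so the argument decouples index by index and the remaining directions are mere spectators.

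First I would invoke the $\C^n$ version of Lemma \ref{nece1}, inequality (4), which is legitimate because every $2\times2$ principal minor of a positive semidefinite Hermitian matrix is itself positive semidefinite; applying the complex Hessian of $\rho$ to the vectors $v_j$ (supported only in the $z_j$- and $w$-directions) reproduces the $\C^2$ derivation verbatim. Together with the fact that $\rho_{w\bar w}$ is bounded above and $|r_w|^2$ is bounded below near the origin, this yields
\[
|\rho_{z_j\bar w}|^2\leq C\bigl(\sL_r(v_j)+|r_{z_j}|^2\bigr)\qquad\text{on }U\cap b\Omega .
\]

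Next I would combine this with the elementary inequality $|A+B|^2\geq\tfrac12|A|^2-|B|^2$, taking $A=r_{z_j\bar w}h+r_{\bar w}h_{z_j}$ and $B=r_{z_j}h_{\bar w}$. Since $h_{\bar w}$ is bounded near the origin, $|B|^2=|r_{z_j}|^2|h_{\bar w}|^2\leq C|r_{z_j}|^2$, hence $|A|^2\leq\tilde C(\sL_r(v_j)+|r_{z_j}|^2)$. Factoring out $|r_{\bar w}|^2|h|^2$, which is bounded below by a positive constant (recall that $r_{\bar w}$ and $h$ are non-vanishing near $p$), turns this into
\[
\left|\frac{\partial}{\partial z_j}\log h+\frac{\partial}{\partial z_j}\log r_{\bar w}\right|^2\leq\hat C\bigl(\sL_r(v_j)+|r_{z_j}|^2\bigr).
\]
Setting $E_j$ equal to the expression inside the modulus and rearranging gives precisely Equation \eqref{E:nlogder} with the asserted bound on $|E_j|^2$.

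The step demanding the most care is not analytic but structural: one must confirm that the $(z_j,w)$ off-diagonal entry of the complex Hessian of $\rho$ coincides exactly with the $\C^2$ expression and that nothing from the other directions contaminates the estimate. This is immediate from the product rule, so I expect no genuine obstacle beyond checking, exactly as in the $\C^2$ case, that $\log h$ and $\log r_{\bar w}$ admit a consistent branch cut on a sufficiently small neighborhood of the origin so that the logarithmic derivatives are well defined. All the substantive new content is thus concentrated in the $\C^n$ version of Lemma \ref{nece1}, and the Proposition itself follows as a routine transcription.
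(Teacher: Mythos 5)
Your proposal is correct and follows the paper's intended argument exactly: the paper proves this proposition by remarking that the $\C^2$ proof carries over verbatim upon restricting to the $(z_j,w)$ $2\times2$ minor of the complex Hessian, which is precisely your reduction, and your chain of estimates (the $\C^n$ analogue of Lemma~\ref{nece1}(4), then $|A+B|^2\geq\tfrac12|A|^2-|B|^2$, then dividing by $|r_{\bar w}|^2|h|^2$) reproduces the $\C^2$ computation step for step. As a minor point, your factored constant $|r_{\bar w}|^2|h|^2$ is actually the correct one, whereas the paper's displayed $\C^2$ computation writes $|r_w||h|$, an evident typo.
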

The same mutiplier function $h$ needs to satisfy the differential equation \eqref{E:nlogder} for all $j=1,...,n-1$.

\section{Concluding remarks}

A similar problem is of interest in the real setting as well: can we find a convex defining function (real Hessian is positive semi-definite for all real vectors) for convex domains (real Hessian of a defining function is positive semi-definite for tangent vectors) in $\R^n$. The statements and proofs are completely analogous. 
\begin{proposition}
Suppose that $\rho=r\cdot h$ is a convex defining function for $$\Omega=\{r(x_1,...,x_{n-1},y)<0\}\subset \R^n$$ with $r_y\neq0$. Then for $j=1,...,n-1$
\[\frac{\partial}{\partial x_j} \log h = -\frac{\partial}{\partial x_j} \log r_{y} +E_j , \] where $|E_j|^2\leq C(\tilde\sL_r(v_j)+|r_{x_j}|^2)$ and $\tilde \sL_r(v_j)$ is the ``real analog of the Levi form''.
\end{proposition}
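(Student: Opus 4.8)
The plan is to transcribe the complex argument — the necessary bounds of Lemma \ref{nece1} together with the logarithmic-derivative proposition that precedes it — into the real category, replacing the complex Hessian by the real Hessian, Hermitian forms by real quadratic forms, and every $\re[\,\cdot\,]$ by an ordinary product. First I would pin down the ``real analog of the Levi form''. With $v_j=\langle 0,\dots,r_y,\dots,0,-r_{x_j}\rangle$ ($r_y$ in the $j$-th slot, $-r_{x_j}$ in the $y$-slot), which is a tangent direction since $r_{x_j}r_y+r_y(-r_{x_j})=0$, I set
\[
\tilde\sL_r(v_j)=r_{x_jx_j}r_y^2+r_{yy}r_{x_j}^2-2r_{x_jy}r_yr_{x_j}.
\]
Exactly as in $\C^2$, on $b\Omega=\{r=0\}$ the product rule gives $\rho_{x_jx_j}=hr_{x_jx_j}+2r_{x_j}h_{x_j}$, $\rho_{yy}=hr_{yy}+2r_yh_y$ and $\rho_{x_jy}=hr_{x_jy}+r_{x_j}h_y+r_yh_{x_j}$, and feeding these into the quadratic form against $v_j$ makes the $h_{x_j}$ and $h_y$ contributions cancel pairwise, leaving the boundary identity $\tilde\sL_\rho(v_j)=h\,\tilde\sL_r(v_j)$.

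The first main step is the real analog of Lemma \ref{nece1}, in particular the bound $\rho_{x_jy}^2\le C(\tilde\sL_r(v_j)+r_{x_j}^2)$ on $b\Omega$. Since $\rho$ is convex, the real Hessian $H_\rho$ is positive semidefinite, so I would restrict it to the $(x_j,y)$-plane and evaluate on the scaled vectors $\langle t\,r_y,-r_{x_j}\rangle$. Expressing the cross term through $\tilde\sL_\rho(v_j)$ gives
\[
0\le H_\rho(\langle t\,r_y,-r_{x_j}\rangle)=(t^2-t)\rho_{x_jx_j}r_y^2+(1-t)\rho_{yy}r_{x_j}^2+t\,\tilde\sL_\rho(v_j),
\]
and the choices $t=\tfrac12$ and $t=2$ (the real counterpart of the $2^{\pm1}$ scaling) together with $\tilde\sL_\rho(v_j)=h\tilde\sL_r(v_j)$ reproduce the upper bound $\rho_{x_jx_j}\le \tfrac{2h\tilde\sL_r(v_j)}{r_y^2}+\tfrac{2\rho_{yy}r_{x_j}^2}{r_y^2}$. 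Combining this with the $2\times2$ minor inequality $\rho_{x_jy}^2\le\rho_{x_jx_j}\rho_{yy}$ and the local boundedness of $\rho_{yy}$ and of $r_y^{-2}$ (here $r_y\neq0$ is used) yields the desired $\rho_{x_jy}^2\le C(\tilde\sL_r(v_j)+r_{x_j}^2)$.

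The second main step extracts the logarithmic derivative. From $\rho_{x_jy}=hr_{x_jy}+r_{x_j}h_y+r_yh_{x_j}$ on $b\Omega$, the elementary real inequality $|A+B|^2\ge\tfrac12A^2-B^2$ (equivalently $\tfrac12(A+2B)^2\ge0$) with $A=hr_{x_jy}+r_yh_{x_j}$ and $B=r_{x_j}h_y$ strips off the genuinely lower-order term $r_{x_j}^2h_y^2\le Cr_{x_j}^2$, giving
\[
C(\tilde\sL_r(v_j)+r_{x_j}^2)\ge\tfrac12\bigl(hr_{x_jy}+r_yh_{x_j}\bigr)^2=\tfrac12\,r_y^2h^2\Bigl(\tfrac{\partial}{\partial x_j}\log h+\tfrac{\partial}{\partial x_j}\log r_y\Bigr)^2.
\]
As $r_y\neq0$ and $h\neq0$ near the origin, dividing through and setting $E_j=\tfrac{\partial}{\partial x_j}\log h+\tfrac{\partial}{\partial x_j}\log r_y$ gives $|E_j|^2\le C(\tilde\sL_r(v_j)+r_{x_j}^2)$, which is the assertion. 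I expect the only real obstacle to be the bookkeeping in the first step: checking that the scaled test vectors genuinely regenerate the four inequalities in the real setting, and confirming that $\rho_{yy}$ and $h_y$ stay bounded near the origin so that the stray terms $\rho_{yy}^2r_{x_j}^2$ and $r_{x_j}^2h_y^2$ are absorbed into the $r_{x_j}^2$ error. Everything else is a verbatim real transcription of the complex proof.
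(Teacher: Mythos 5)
Your proof is correct and takes exactly the route the paper intends: the paper offers no separate argument for this proposition beyond declaring the real statements and proofs ``completely analogous'' to the $\C^2$ case, and your transcription of the necessary-bounds lemma and the logarithmic-derivative proposition into the real category (with the cancellation $\tilde\sL_\rho(v_j)=h\,\tilde\sL_r(v_j)$ on the boundary, the scaled test vectors, the $2\times2$ minor inequality, and the $\frac12 A^2-B^2$ trick) is precisely that analogue. The bookkeeping points you flag --- local boundedness of $\rho_{yy}$, $h_y$, $1/r_y^2$, and $1/h^2$ --- all hold near the origin, so nothing further is needed.
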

However the real setting is ``easier'' in the following sense: since $r_y$ is real, we may take $h=1+Kr+r_y$ to be the multiplier function. 

This recovers, from a different view-point, a result from \cite{HerMcN09} that every convex domain in $\R^n$ has a defining function whose Hessian is positive semi-definite in a neighborhood of the boundary of the domain.

A further question is considering extending the positivity of the Hessian past the boundary and inside the domain. 
In a similar fashion to Proposition \ref{Prop:Hessian}, we obtain 
\begin{align*}\sh_{rh}=& \sh_{rp} +4K^2r^2 \sh_r +4K^2 r\sL_r + 2K \sL_{rp} +2Kr \left[(rp)_{z\bar z}r_{w\bar w} +(rp)_{w\bar w}r_{z\bar z} -2\re[(rp)_{z\bar w}r_{\bar z w}]\right] \\
=&\sh_{rp} +4K^2r^2 \sh_r +4K^2 r\sL_r + 2K p \sL_r+2K r \sL_p \\
&+2Kr [(rp)_{z\bar z}r_{w\bar w} +(rp)_{w\bar w}r_{z\bar z} -2\re[(rp)_{z\bar w}r_{\bar z w}]]
\end{align*}
We plan to consider this in a future paper.

\bibliographystyle{acm}
\bibliography{BibMaster}

\end{document}